\newtheorem{theorem}{Theorem}[section]
\newtheorem{lemma}[theorem]{Lemma}
\newtheorem*{theorem*}{Theorem}
\newtheorem*{corollary*}{Corollary}
\newtheorem{introtheorem}{Theorem}
\crefname{introcorollary}{corollary}{corollary}
\theoremstyle{definition}
\newtheorem{definition}[theorem]{Definition}
\newtheorem*{conjecture*}{Conjecture}
\theoremstyle{remark}
\newtheorem*{acknowledgment}{Acknowledgment}
\newcommand{\sA}{\mathscr{A}}
\newcommand{\sF}{\mathscr{F}}
\newcommand{\sE}{\mathscr{E}}
\newcommand{\add}{\mathsf{add}\hspace{.01in}}
\newcommand{\Ab}{\mathsf{Ab}}
\newcommand{\End}{\operatorname{End}\nolimits}
\newcommand{\Aut}{\operatorname{Aut}\nolimits}
\newcommand{\YExt}{\operatorname{YExt}\nolimits}
\newcommand{\Hom}{\operatorname{Hom}\nolimits}
\newcommand{\id}{\operatorname{id}\nolimits}
\newcommand{\op}{\operatorname{op}\nolimits}
\newcommand{\rad}{\operatorname{rad}\nolimits}
\newcommand{\triv}{\operatorname{triv}}
\renewcommand{\subset}{\subseteq}
\def\Biggg#1{{\hbox{$\left#1\vbox to25\p@{}\right.\n@space$}}}
\newcommand{\includepdf}[1]{
   \begin{center}
      \includegraphics{#1}
   \end{center}
}
\newcommand{\includepdfdot}[1]{
   \begin{center}
      \includegraphics{#1}.
   \end{center}
}
\newcommand{\includepdfcomma}[1]{
   \begin{center}
      \includegraphics{#1},
   \end{center}
}
\begin{document}
   \title[$n$-Exact categories arising from $(n+2)$-angulated categories]{$n$-Exact categories arising from $(n+2)$-angulated categories}

   \author{Carlo Klapproth}\address{Department of Mathematics, Aarhus University, 8000 Aarhus C, Denmark} 
   \email{carlo.klapproth@math.au.dk}
   
   \begin{abstract}
      Let $(\sF,\Sigma_n,\pentagon)$ be an $(n+2)$-angulated Krull-Schmidt category and $\sA \subset \sF$ an $n$-extension closed, additive and full subcategory with $\Hom_{\sF}(\Sigma_n \sA, \sA) = 0$.
      Then $\sA$ naturally carries the structure $(\sA, \sE_{\sA})$ of an $n$-exact category in the sense of \cite[definition 4.2]{jasso_n-abelian_2016}, arising from short $(n+2)$-angles in $(\sF,\Sigma_n,\pentagon)$ with objects in $\sA$ and there is a binatural and bilinear isomorphism $\YExt^{n}_{(\sA,\sE_{\sA})}(A_{n+1},A_0) \cong \Hom_{\sF}(A_{n+1}, \Sigma_n A_{0})$ for $A_0, A_{n+1} \in \sA$.
      For $n = 1$ this has been shown in \cite{dyer_exact_2005} and we generalize this result to the case $n > 1$.
      On the journey to this result, we also develop a technique for harvesting information from the higher octahedral axiom (N4*) as defined in \cite[section 4]{bergh_axioms_2013}.
      Additionally, we show that the axiom (F3) for pre-$(n+2)$-angulated categories, stating that a commutative square can be extended to a morphism of $(n+2)$-angles and defined in \cite[definition 2.1]{geiss_n-angulated_2013}, implies a stronger version of itself.
   \end{abstract}

   \maketitle
   \section*{Introduction}
   Let $\sA$ be an additive extension-closed subcategory of a triangulated category $(\sF, \Sigma, \vartriangle)$, with $\Hom_{\sF}(\Sigma \sA, \sA) = 0$.
   Then $\sA$ carries the structure $(\sA, \sE_{\sA})$ of an exact category, where the conflations in $\sA$ are precisely the sequences $A_0 \to A_1 \to A_2$ which can be completed to a triangle $A_0 \to A_1 \to A_2 \rightsquigarrow$ in $\vartriangle$.
   Moreover, there is an isomorphism $\YExt^1_{(\sA,\sE_{\sA})}(A_2, A_0) \cong \Hom_{\sF}(A_2, \Sigma A_0)$ which is natural in $A_0,A_2 \in \sA$.
   This is the main theorem of \cite{dyer_exact_2005} and it was also rediscovered in \cite[proposition 2.5]{joergensen_simple_minded_2020}. 
   We show that this result has a higher counterpart:
   \begin{introtheorem} \label{introtheorem:maintheorem}
      Let $\sA$ be an full, additive and $n$-extension closed subcategory of an $(n+2)$-angulated Krull-Schmidt category $(\sF, \Sigma_n, \text{\upshape \pentagon})$ with $\Hom_{\sF}(\Sigma_n \sA, \sA) = 0$.
      Then 
      \begin{enumerate}
         \item $(\sA, \sE_{\sA})$ is an $n$-exact category and
         \item there is a natural and bilinear isomorphism $\Hom_{\sF}(-, \Sigma_n(-)) \to \YExt^n_{(\sA, \sE_{\sA})}(-, -)$ of functors $\sA \times \sA^{\op} \to \Ab$. \label{enum:introthm-second-point}
      \end{enumerate}
   \end{introtheorem}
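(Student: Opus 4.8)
The plan is to first fix the class $\sE_{\sA}$ of candidate conflations and to show directly that each of its members is an $n$-exact sequence in the sense of an $n$-kernel--cokernel pair, and only afterwards to verify the axioms (E0)--(E2${}^{\mathrm{op}}$). Concretely, I would declare a complex $A_0 \xrightarrow{d^0} A_1 \to \dots \to A_{n+1}$ with all terms in $\sA$ to lie in $\sE_{\sA}$ exactly when it can be completed by some $d^{n+1}\colon A_{n+1}\to \Sigma_n A_0$ to a short $(n+2)$-angle in $\pentagon$. To see that such a complex is an $n$-kernel--cokernel pair, apply the cohomological functor $\Hom_{\sF}(B,-)$ for $B\in\sA$ to the $(n+2)$-angle; this yields a long exact sequence, and the two ``overhang'' terms $\Hom_{\sF}(B,\Sigma_n^{-1}A_{n+1})\cong\Hom_{\sF}(\Sigma_n B, A_{n+1})$ and, after also applying $\Hom_{\sF}(-,B)$, the term $\Hom_{\sF}(\Sigma_n A_0,B)$ both vanish by the hypothesis $\Hom_{\sF}(\Sigma_n\sA,\sA)=0$. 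Exactness of the truncated sequences then says precisely that $(d^0,\dots,d^{n-1})$ is an $n$-kernel of $d^n$ and $(d^1,\dots,d^n)$ is an $n$-cokernel of $d^0$; since $\sA$ is full, these (co)kernels are computed inside $\sA$.

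Then (E0) and closure under isomorphism are inherited from the corresponding axioms for $\pentagon$, the trivial and split $(n+2)$-angles supplying the degenerate admissible sequences. The substance of part (1) is the pushout and pullback axioms (E2)/(E2${}^{\mathrm{op}}$) together with closure of admissible monomorphisms and epimorphisms under composition, (E1)/(E1${}^{\mathrm{op}}$). Here I would mimic Verdier's classical derivation of these facts from the octahedral axiom, replacing it by the higher octahedral axiom (N4*). For (E1), given two composable inflations realized by $(n+2)$-angles, (N4*) should produce a short $(n+2)$-angle relating the two cones; reading off the appropriate rows and columns of the (N4*)-diagram exhibits the cone of the composite as an iterated extension of objects of $\sA$, whence it lies in $\sA$ by $n$-extension closedness and the composite is again an inflation. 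For (E2), given an inflation $A_0\to A_1$ starting a conflation and a morphism $A_0\to B_0$, I would feed the relevant factorization into (N4*) to build the pushout conflation $B_0\to B_1\to\dots\to A_{n+1}$, again keeping the new objects $B_1,\dots,B_n$ inside $\sA$ by $n$-extension closedness and using the $\Hom$-vanishing to promote the resulting complex to a member of $\sE_{\sA}$; the pullback axiom (E2${}^{\mathrm{op}}$) is dual.

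I expect the extraction of the correct sub-diagram from (N4*) to be the main obstacle. Unlike the $n=1$ octahedron, (N4*) is a large and only partially commutative array, and one must isolate exactly the $(n+2)$-angle and the morphism of $(n+2)$-angles that witness the pushout, then check that this morphism carries the identity in its last component --- as the $n$-exact pushout requires --- and restricts to the prescribed map $A_0\to B_0$. This is precisely where I anticipate needing the strengthened form of (F3) advertised in the abstract: plain (F3) only asserts that a commutative square extends to \emph{some} morphism of $(n+2)$-angles, whereas to control the last component and the admissibility of the new sequence one wants to prescribe more of that morphism. Establishing that (F3) self-improves in this way is therefore a lemma I would prove first, before attacking (E1) and (E2).

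For part (2), I would define mutually inverse maps. Set $\Psi\colon\YExt^n_{(\sA,\sE_{\sA})}(A_{n+1},A_0)\to\Hom_{\sF}(A_{n+1},\Sigma_n A_0)$ by sending the class of a conflation to the connecting morphism $d^{n+1}\colon A_{n+1}\to\Sigma_n A_0$ of any $(n+2)$-angle completing it, and define $\Phi$ in the other direction by completing a morphism $\varphi$ to a short $(n+2)$-angle and taking the Yoneda class of its underlying complex, which lies in $\sE_{\sA}$ by part (1). The crux is that $\Psi$ is well defined: two Yoneda-equivalent conflations are linked by morphisms of conflations fixing $A_0$ and $A_{n+1}$, and completing each such morphism to a morphism of $(n+2)$-angles --- where the strengthened (F3) lets me arrange that the last component is $\Sigma_n$ of the identity on $A_0$ --- forces the connecting morphisms to agree. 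That $\Phi$ and $\Psi$ are mutually inverse is then immediate from the rotation axiom: the angle completing $\varphi$ has connecting morphism $\varphi$, and a conflation completes its own connecting morphism up to an isomorphism of angles, i.e.\ up to Yoneda equivalence. Finally, bilinearity matches the Baer sum of conflations with addition in $\Hom_{\sF}$ via the direct-sum $(n+2)$-angle, and naturality in both variables reduces once more to functoriality of completing squares to morphisms of $(n+2)$-angles, so that $\Phi$ assembles into the asserted natural isomorphism of functors $\sA\times\sA^{\op}\to\Ab$.
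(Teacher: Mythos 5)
Your overall skeleton --- defining $\sE_{\sA}$ via completions to $(n+2)$-angles, proving the conflations are $n$-exact by applying $\Hom$-functors together with the vanishing hypothesis, verifying Jasso's axioms, and building mutually inverse maps $\Phi,\Psi$ where well-definedness of $\Psi$ rests on a strengthened (F3) --- coincides with the paper's, and your part (2) is essentially the paper's proof of \cref{lemma:Yext-isomorphism} via \cref{lemma:completion-lemma} and \cref{lemma:connecting-morphism-is-unique}. The genuine gap is in (E1). You claim that (N4*) ``exhibits the cone of the composite as an iterated extension of objects of $\sA$, whence it lies in $\sA$ by $n$-extension closedness.'' That step fails for $n>1$, for two reasons. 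First, $n$-extension closedness is an \emph{existence} statement: for a morphism $A_{n+1}\to\Sigma_n A_0$ it provides \emph{some} completion with middle terms in $\sA$; it says nothing about the middle terms of an $(n+2)$-angle that (N4*) hands to you. Second, what (N4*) actually produces is the totalisation angle, whose middle terms are direct sums such as $X_{k+2}\oplus X''_{k+1}\oplus X'_k$, mixing the unknown cone objects $X''_i$ of the composite with objects of $\sA$ and ending in the morphism $g_{n+1}\Sigma_n f_1$; the cone of the composite is not presented as an extension of objects of $\sA$ in any sense to which closure applies. Worse, for $n>1$ ``the cone'' of a morphism is not even well defined, since completions are unique only up to adding trivial $(n+2)$-angles.

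This is precisely why the theorem assumes $\sF$ to be Krull--Schmidt, a hypothesis your proposal never uses. The paper's repair is the machinery you are missing: minimal completions exist (\cref{lemma:minimal-n-plus-2-angles-exist}) and are unique and are direct summands of every completion (\cref{lemma:uniqueness-of-minimal-n+2-angles}); one takes the middle row of the (N4*) input to be the \emph{minimal} completion of $h_0=f_0g_0$, so its higher maps are radical; then the comparison lemma (\cref{lemma:comparison-lemma}), whose proof needs the matrix lemma (\cref{lemma:matrix-lemma}), writes the totalisation as the minimal completion of $g_{n+1}\Sigma_n f_1$ (whose terms lie in $\sA$ because it is a summand of an $\sA$-completion) plus trivial angles $\triv_k(Z_k)$ with $Z_k$ in $\add$ of objects of $\sA$; only then do the direct-sum identities force $X''_i\in\sA$, using that $\sA$ is closed under direct summands. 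A smaller but real omission: Jasso's definition requires $\sE_{\sA}$ to be closed under \emph{weak} isomorphisms, not merely isomorphisms; this is not inherited from the axioms for $\pentagon$ but is the substantive \cref{lemma:weak-isomorphism-closure}, whose proof again uses minimal completions, radical arguments, and a homotopy argument showing $g_nf_{n+1}=0$. Finally, your expectation that the strengthened (F3) is what rescues (E1)/(E2) is a misattribution: (E2) follows from (F4) via good morphisms and mapping cones (\cref{lemma:E2-axiom}), while the strengthened (F3) is needed exactly where you also deploy it, namely in part (2).
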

   Here $n$-exact categories are as defined in \cite[definition 4.2]{jasso_n-abelian_2016} and $\YExt^n(A_{n+1}, A_0)$ denotes the $n$-Yoneda-extensions of $A_{n+1}$ by $A_0$ in $(\sA, \sE_{\sA})$, that is the class of all sequences $A_0 \to A_1 \to \cdots \to A_n \to A_{n+1}$ in $\sE_{\sA}$ modulo the equivalence relation generated by the weak equivalences of $n$-exact sequences as defined in \cite[definition 2.9]{jasso_n-abelian_2016}.
   The $n$-exact structure $\sE_{\sA}$ is defined analogously to the theorem for triangulated categories.
   The proof of \cref{introtheorem:maintheorem} has some new features.
   Notably, showing that the composition of two inflations is an inflation (see \cite[definition 4.2(E1)]{jasso_n-abelian_2016}) causes a problem as the (very technical) higher octahedral axiom \cite[definition 2.1(F4)]{geiss_n-angulated_2013} is required for this.

   Hence, we reintroduce minimal $(n+2)$-angles (see \cite[lemma 5.18]{oppermann_higher-dimensional_2012} and \cite[lemma 3.14]{Fedele_2019}), essentially assigning an, up to isomorphism and rotation, unique $(n+2)$-angle to a given morphism.
   We then use this and the version of the higher octahedral axiom (N4*) defined in \cite[section 4]{bergh_axioms_2013} to show that the objects of the minimal $(n+2)$-angle of a composite $h_0 = f_0 g_0$ can be calculated by means of the objects of any $(n+2)$-angle 
   \includepdf{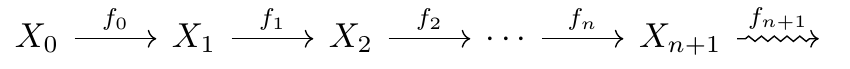}
   containing $f_0$, the objects of any $(n+2)$-angle 
   \includepdf{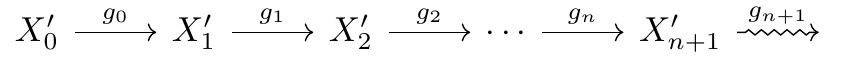}
   containing $g_0$ and the minimal $(n+2)$-angle of $g_{n+1} \Sigma_n f_1$ (see \cref{lemma:comparison-lemma}).
   This will equip us with the necessary leverage to show that \cite[definition 4.2(E1)]{jasso_n-abelian_2016} holds in the situation of \cref{introtheorem:maintheorem} and it is likely very useful in other situations as well.

   On the journey of showing part (\ref{enum:introthm-second-point}) of \cref{introtheorem:maintheorem} we also show that the axiom (F3) from \cite[definition 2.1]{geiss_n-angulated_2013} implies a stronger version of itself:
   Suppose we are given a commutative diagram
   \includepdf{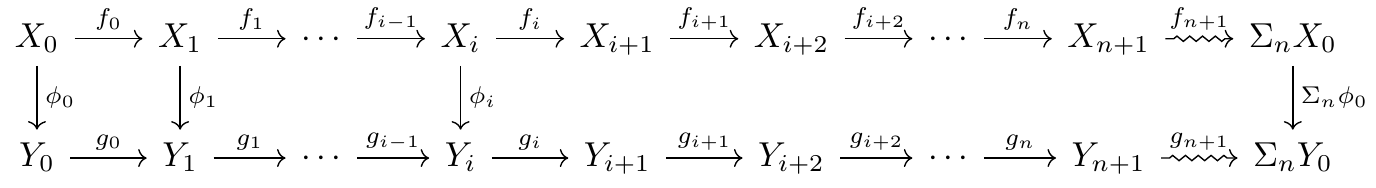}
   where the upper row is an $(n+2)$-angle $X$ and the lower row is an $(n+2)$-angle $Y$ and $i \geq 2$ is arbitrary.
   We show that we can complete the given morphisms to a morphism $\phi = (\phi_0, \dots, \phi_{n+1}) \colon X \to Y$ of $(n+2)$-angles.
   This will be used to show that the connecting morphism $f_{n+1}$ of $X$ is uniquely determined by $f_0, \dots, f_n$ if $\Hom_{\sF}(\Sigma_n X_0, X_{n+1}) = 0$ generalizing [BBD82, corollaire 1.1.10(ii)] to $(n+2)$-angulated categories (see \cref{lemma:connecting-morphism-is-unique}).

   \section{Preliminaries and notations} \label{sec:preliminaries}

   Given morphisms $f \colon X \to X'$ and $g \colon X' \to X''$ we denote the composite $X \xrightarrow{f} X' \xrightarrow{g} X''$ by $fg$.
   Throughout this paper suppose that $(\sF,\Sigma_n,\pentagon)$ is pre-$(n+2)$-angulated category in the sense of \cite[definition 2.1]{geiss_n-angulated_2013}.
   As in \cite{geiss_n-angulated_2013} we assume that $\Sigma_n$ is an automorphism of $\sF$ and not just an autoequivalence.
   When no ambient category is mentioned we implicitly assume it to be $\sF$.
   Further, all subcategories considered will be full subcategories, unless mentioned otherwise.
   Likewise to \cite[definition 2.1]{geiss_n-angulated_2013} we call a sequence of morphisms $X_0 \to X_1 \to \cdots \to X_{n+1} \to \Sigma_n X_0$ an $n$-$\Sigma_n$-sequence.
   Such an $n$-$\Sigma_n$-sequence is called $(n+2)$-angle if it belongs to $\pentagon$.
   In that case we may write the rightmost morphism squiggly, to emphasize that this $n$-$\Sigma_n$-sequence is a $(n+2)$-angle.
   We then may also drop writing the target $\Sigma_n X_0$ of the rightmost morphism.
   Suppose $X$ is an $(n+2)$-angle of the shape
   \includepdfdot{tikz/n-plus-2-angle-generic.pdf}
   Then we call $f_i$ the morphism in \emph{position $i$} of $X$.
   As we will often deal with trivial $(n+2)$-angles, we define a map $\triv \colon \sF \times \{0, \dots, n+1\} \to \pentagon$ assigning to $(Z,i) \in \sF \times \{0, \dots, n+1\}$ the trivial $(n+2)$-angle $0 \to 0 \to \cdots \to 0 \to Z \to Z \to 0 \to \cdots \to 0 \to 0 \rightsquigarrow$
   having $\id_Z$ as the morphism in position $i$.
   We will always use subscript notation, i.e.\ we will write $\triv_i(Z) := \triv(Z,i)$ for $Z \in \sF$ and $i \in \{0,\dots,n+1\}$.

   For convenience we restate the following definitions:
   \begin{definition} \label{definition:n-extension-closed}
      A subcategory $\sA \subset \sF$ is called 
      \begin{enumerate}
         \item \emph{additive} if it is closed under direct summands and direct sums and 
         \item \emph{$n$-extension closed} if for all morphisms $f_{n+1} \colon A_{n+1} \to \Sigma_n A_0$ with $A_0, A_{n+1} \in \sA$ there is an $(n+2)$-angle
         \includepdf{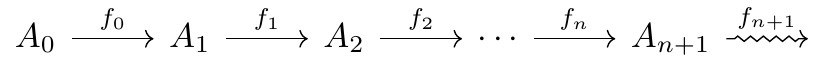}
         with terms $A_1, \dots, A_n \in \sA$.
      \end{enumerate}
   \end{definition}
   Recall that in pre-$(n+2)$-angulated categories as defined in \cite[definition 2.1(a)]{geiss_n-angulated_2013} the class $\pentagon$ of $(n+2)$-angles is closed under direct summands.
   However, since two isomorphic $n$-$\Sigma_n$-sequences are (trivial) direct summands of each other, this implies that $\pentagon$ is closed under isomorphisms inside the class of $n$-$\Sigma_n$-sequences.
   Similarly, any additive subcategory $\sA \subset \sF$ is closed under isomorphisms in $\sF$.

   We will often use this and the following trivial lemma to rename objects:
   \begin{lemma}[Replacement lemma] \label{lemma:replacement_lemma}
      Suppose given an $(n+2)$-angle $X$ of the shape
      \includepdf{tikz/n-plus-2-angle-generic.pdf}
      and a commutative diagram
      \includepdf{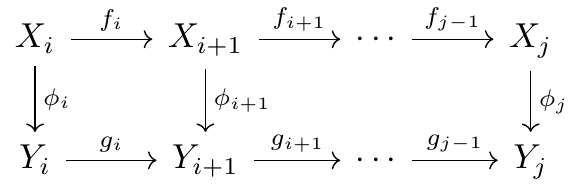}
      with $0 \leq i \leq j \leq n+1$ and $\phi_i,\dots,\phi_j$ isomorphisms.
      Then the $(n+2)$-angle $Y$ given by
      \includepdf{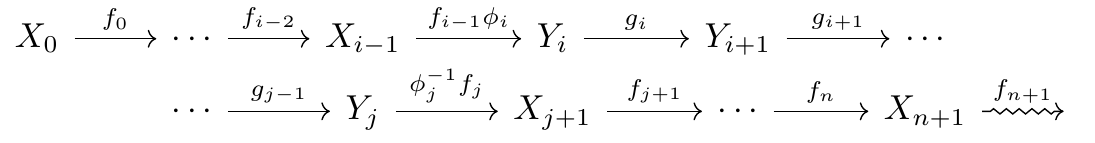}
      is isomorphic to $X$ via $\phi = (\id_{X_0}, \dots, \id_{X_{i-1}}, \phi_i, \dots, \phi_j, \id_{X_{j+1}}, \dots, \id_{X_{n+1}}) \colon X \to Y$.
   \end{lemma}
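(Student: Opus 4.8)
The plan is to exhibit $\phi$ as an isomorphism of $n$-$\Sigma_n$-sequences and then appeal to the closure of $\pentagon$ under isomorphisms recorded just above the statement. First I would read the morphisms of $Y$ off the commutative diagram: outside the replaced range $\{i,\dots,j\}$ the objects and morphisms of $Y$ coincide with those of $X$, whereas the morphisms meeting the new objects are the transports of the morphisms of $X$ along $\phi_i,\dots,\phi_j$. With the convention that $fg$ means $f$ followed by $g$, this forces the incoming morphism to be $f_{i-1}\phi_i$, the outgoing morphism to be $\phi_j^{-1}f_j$, and an internal morphism (for $i\le k<j$) to be $\phi_k^{-1}f_k\phi_{k+1}$ --- precisely the formulas making each square of $\phi$ commute.

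Next I would verify that $\phi=(\id_{X_0},\dots,\id_{X_{i-1}},\phi_i,\dots,\phi_j,\id_{X_{j+1}},\dots,\id_{X_{n+1}})$ is indeed a morphism $X\to Y$. For squares lying wholly outside the replaced range both vertical maps are identities and the two horizontal maps agree, so these commute trivially; the two boundary squares at positions $i-1$ and $j$ together with the internal squares commute by the defining formulas above, e.g.\ at position $j$ we have $f_j\id_{X_{j+1}}=f_j=\phi_j(\phi_j^{-1}f_j)$. The only square demanding a moment's thought is the wrap-around square carrying the connecting morphism, whose right-hand vertical map is $\Sigma_n$ of the component in position $0$; this is non-trivial exactly in the edge cases $i=0$ and $j=n+1$, where $\Sigma_n\phi_0$, respectively $\phi_{n+1}$, enters and the connecting morphism of $Y$ is correspondingly $f_{n+1}\Sigma_n\phi_0$, $\phi_{n+1}^{-1}f_{n+1}$, or $\phi_{n+1}^{-1}f_{n+1}\Sigma_n\phi_0$. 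In each of the four cases commutativity is immediate once these formulas are substituted.

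Finally, every component of $\phi$ is either an identity or one of the isomorphisms $\phi_k$, so the componentwise inverse is a two-sided inverse and $\phi$ is an isomorphism of $n$-$\Sigma_n$-sequences. Since $X\in\pentagon$ and $\pentagon$ is closed under isomorphisms inside the class of $n$-$\Sigma_n$-sequences, we conclude $Y\in\pentagon$, so that $Y$ is an $(n+2)$-angle isomorphic to $X$ via $\phi$, as claimed. I expect no genuine obstacle here, as the statement is purely formal; the only care needed is the bookkeeping at the two boundary squares and, above all, the $\Sigma_n$-twisted connecting square in the extreme cases $i=0$ and $j=n+1$.
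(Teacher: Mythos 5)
Your proof is correct and follows exactly the paper's route: you exhibit $\phi$ as an isomorphism of $n$-$\Sigma_n$-sequences and invoke the closure of $\pentagon$ under isomorphisms recorded just before the lemma. The paper simply declares this verification trivial, whereas you spell out the square-by-square bookkeeping, including the $\Sigma_n$-twisted wrap-around square in the edge cases $i=0$ and $j=n+1$; this is a more careful write-up of the same argument, not a different one.
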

   \begin{proof}
      The proof is trivial, since the class of $(n+2)$-angles is closed under isomorphisms and the given $\phi$ is obviously an isomorphism of $n$-$\Sigma_n$-sequences.
   \end{proof}
  
   Apart from pre-$(n+2)$-angulated categories, we will also have to deal with $n$-exact categories.
   Recall the following from \cite[definition 2.4]{jasso_n-abelian_2016}:
   An \emph{$n$-exact sequence} in an additive cateogry $\sA$ is a sequence of morphisms 
   \includepdf{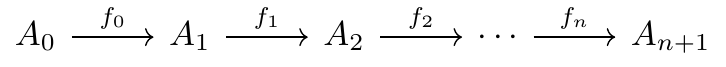}
   such that $f_0$ is a monomorphism, $f_n$ is a epimorphism, $f_i$ is a weak cokernel of $f_{i-1}$ for $i = 1,\dots,n$ and $f_i$ is a weak kernel of $f_{i+1}$ for $i = 0, \dots, n-1$.
   Notice, here $f_0$ being monic is equivalent to $f_0$ being a kernel of $f_1$ and $f_n$ being epic is equivalent to $f_n$ being a cokernel of $f_{n-1}$.
   Also recall from \cite[definition 4.1]{jasso_n-abelian_2016} that a morphism 
   \includepdf{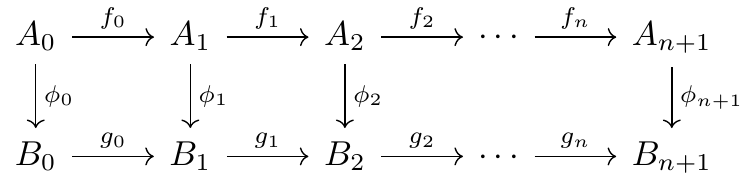}
   from an $n$-exact sequences $E$ to an $n$-exact sequence $F$ is called \emph{weak isomorphism} if $\phi_i$ and $\phi_{i+1}$ for an $i = 0, \dots, n$ or $\phi_0$ and $\phi_{n+1}$ are isomorphisms.
   The sequences $E$ and $F$ are then called \emph{weakly isomorphic}.
   If $A_0 = B_0$ and $A_{n+1} = B_{n+1}$, as well as $\phi_0 = \id_{A_0}$ and $\phi_{n+1} = \id_{A_{n+1}}$ then $E$ and $F$ are called \emph{equivalent} and the morphism between $E$ and $F$ is then called \emph{equivalence of $n$-exact sequences} according to \cite[definition 2.9]{jasso_n-abelian_2016}.

   Recall, by \cite[proposition 2.5(a)]{geiss_n-angulated_2013}, all $(n+2)$-angles are exact, that is applying $\Hom_\sF(X,-)$ or $\Hom_\sF(-,X)$ to an $(n+2)$-angle yields an (infinitely extended) long exact sequence.
   In particular, we will use the following and its dual regularly: 
   Given an $(n+2)$-angle
   \includepdf{tikz/n-plus-2-angle-generic.pdf}
   and a morphism $f \colon X' \to X_i$ for some $i \in \{0,\dots,n+1\}$.
   Then $f$ factors through $f_{i-1}$, where we use the convention $f_{-1} = \Sigma_n^{-1} f_{n+1}$, iff $f f_i = 0$.
   This also shows that $f_{i-1}$ is a weak kernel of $f_{i}$ for $i \in \{0, \dots, n+1\}$.
   Hence, the following trivial lemma and its dual are not only useful in the context of $n$-exact sequences, but $(n+2)$-angulated categories as well:

   \begin{lemma} \label{lemma:weak-cokernels}
      Suppose we are given a commutative diagram as in \cref{figure:weak-cokernels}
      \begin{figure}[h]
      \includepdf{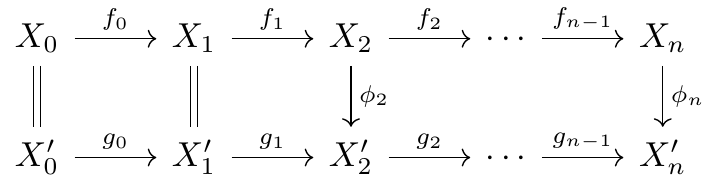}
      \caption{Morphisms between weak cokernels.}
      \label{figure:weak-cokernels}
      \end{figure}
      in an arbitrary additive category $\sA$.
      Suppose 
      \begin{enumerate}
         \item the morphism $f_i$ is a weak cokernel of $f_{i-1}$ for all $i = 1, \dots, n-1$,
         \item the morphism $g_i$ is a weak cokernel of $g_{i-1}$ for all $i = 1, \dots, n-1$ and
         \item the morphisms $f_2, \dots, f_{n-1}$ and $g_2, \dots, g_{n-1}$ are in the radical.
      \end{enumerate}
      Then $\phi_{2}, \dots, \phi_{n-1}$ are isomorphisms.
   \end{lemma}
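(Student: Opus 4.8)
The idea is to imitate the classical proof that a morphism between minimal complexes which is invertible at one end is invertible throughout: I would build a chain map in the opposite direction and play the two composites off against the identity. Throughout I compose left to right, as in the paper, write $f_\bullet$ for the top row, $g_\bullet$ for the bottom row, and $\phi_i$ for the vertical maps, so that commutativity reads $\phi_i g_i = f_i \phi_{i+1}$. Using that the two leftmost columns $\phi_0$ and $\phi_1$ of the diagram are isomorphisms, I would first construct a morphism $\psi_\bullet$ of sequences in the reverse direction by setting $\psi_0 := \phi_0^{-1}$, $\psi_1 := \phi_1^{-1}$ and defining $\psi_i \colon B_i \to A_i$ for $i \ge 2$ inductively. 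Given $\psi_{i-1}$ with $g_{i-2}\psi_{i-1} = \psi_{i-2}f_{i-2}$, the map $\psi_{i-1}f_{i-1}$ kills $g_{i-2}$, since $g_{i-2}(\psi_{i-1}f_{i-1}) = \psi_{i-2}f_{i-2}f_{i-1} = 0$ by $f_{i-2}f_{i-1}=0$ (weak cokernels compose to zero), while for $i = 2$ the vanishing $g_0\psi_1 f_1 = \phi_0^{-1}f_0 f_1 = 0$ comes from the $0$-th commuting square together with $\phi_0$ being invertible. As $g_{i-1}$ is a weak cokernel of $g_{i-2}$ by hypothesis (2), the map $\psi_{i-1}f_{i-1}$ therefore factors as $g_{i-1}\psi_i$, and the resulting $\psi_\bullet$ is a chain map.

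Next I would form the two composite chain endomorphisms $\theta := \phi\psi$ of $A_\bullet$ and $\eta := \psi\phi$ of $B_\bullet$. The choice of $\psi_0,\psi_1$ gives $\theta_0 = \theta_1 = \id$ and $\eta_0 = \eta_1 = \id$, so the defects $\delta_i := \id_{A_i} - \theta_i$ and $\epsilon_i := \id_{B_i} - \eta_i$ are chain maps vanishing in positions $0$ and $1$. I claim $\delta_i \in \rad(\End A_i)$ for $2 \le i \le n-1$; the argument for $\epsilon_i$ is identical with $g$ replacing $f$. This I would prove by induction, producing $s_i \colon A_{i+1} \to A_i$ with $\delta_i = s_{i-1}f_{i-1} + f_i s_i$ (taking $s_1 := 0$): from $f_{i-1}\delta_i = \delta_{i-1}f_{i-1} = (s_{i-2}f_{i-2}+f_{i-1}s_{i-1})f_{i-1} = f_{i-1}s_{i-1}f_{i-1}$ one obtains $f_{i-1}(\delta_i - s_{i-1}f_{i-1}) = 0$, whence the difference factors through $f_i$ because $f_i$ is a weak cokernel of $f_{i-1}$ (hypothesis (1)), yielding $s_i$. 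Since $f_{i-1}, f_i \in \rad$ for the relevant indices by hypothesis (3) (and the first summand is zero when $i=2$), both summands lie in the radical, so $\delta_i \in \rad(\End A_i)$.

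Finally, $\delta_i \in \rad(\End A_i)$ forces $\theta_i = \id_{A_i} - \delta_i$ to be an automorphism, and likewise $\eta_i$ is an automorphism of $B_i$, for $2 \le i \le n-1$. As $\phi_i\psi_i = \theta_i$ and $\psi_i\phi_i = \eta_i$ are both invertible, $\phi_i$ is at once a split monomorphism and a split epimorphism, hence an isomorphism, which is the assertion. The one genuinely delicate point is the inductive factorization of the second paragraph: weak cokernels furnish the lifts $s_i$ only non-uniquely, so one must carry the explicit normal form $\delta_i = s_{i-1}f_{i-1}+f_i s_i$ along the induction in order to guarantee that the defect lands inside the radical, and it is exactly here that hypotheses (1) and (3) interlock. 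The remaining ingredient, that $\id$ minus a radical endomorphism is invertible, is a formal property of the radical of an additive category.
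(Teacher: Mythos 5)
Your proof is correct and follows essentially the same route as the paper's: construct a reverse chain map using that the $g_i$ are weak cokernels, then show each composite endomorphism differs from the identity by a null-homotopic defect whose homotopy terms $s_{i-1}f_{i-1}+f_is_i$ lie in the radical, so the composites are automorphisms and each $\phi_i$ is split mono and split epi. The only difference is organizational: the paper first proves the endomorphism case (both rows equal) as a standalone special case and then reduces the general case to it via the reverse map, whereas you inline that homotopy-plus-radical argument directly for the two composites $\phi\psi$ and $\psi\phi$.
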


   Notice, the construction of the inverses of $\phi_2, \dots, \phi_{n-1}$ is similar to the construction of the homotopy inverses in \cite[proposition 2.7]{jasso_n-abelian_2016} and to \cite[lemma 2.1]{jasso_n-abelian_2016}.
   However, we cannot directly deduce \cref{lemma:weak-cokernels} from the statements in \cite{jasso_n-abelian_2016}, so we give a proof for the convenience of the reader:

   \begin{proof}
      We show the lemma under the additional assumption that $X_i = X'_i$ for $i = 0, \dots, n$ and $f_i = g_i$ for $i = 0, \dots, n-1$ first:
      We obtain a commutative diagram 
      \includepdf{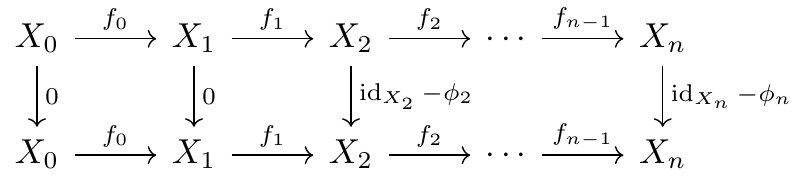}
      by subtracting the identity from each vertical morphism.
      Then $\id_{X_2}-\phi_2$ factors through $f_2$, using $f_1(\id_{X_2}-\phi_2) = 0$ and $f_2$ being a weak cokernel of $f_1$.
      Say $\id_{X_2} - \phi_2 = f_2 h_3$ for some $h_3 \colon X_3 \to X_2$.
      In the same manner we can construct morphisms $h_i \colon X_i \to X_{i-1}$ for $i = 4, \dots, n$ with $\id_{X_{i-1}} - \phi_{i-1} = f_{i-1} h_{i} + h_{i-1} f_{i-2}$ inductively: 
      Suppose all $h_k$ have been constructed for $k = 3, \dots, i-1$ and $4 \leq i \leq n$.
      We have
      \begin{equation} f_{i-2}(\id_{X_{i-1}} - \phi_{i-1} - h_{i-1} f_{i-2}) = (\id_{X_{i-2}} - \phi_{i-2})f_{i-2} - f_{i-2} h_{i-1} f_{i-2} \label{equation:homotopy-lift} \end{equation}
      by commutativity of the above diagram.
      By assumption and because $f_{i-3} f_{i-2} = 0$ holds, or for $i = 4$ because $\id_{X_{i-2}} - \phi_{i-2} = f_2 h_3$ holds, the right hand side of \cref{equation:homotopy-lift} vanishes.
      Since $f_{i-1}$ is a weak cokernel of $f_{i-2}$ the left side of \cref{equation:homotopy-lift} vanishing tells us that $h_{i}$ exists as desired, which completes the induction.
      Now, since $\id_{X_2} - \phi_2 = f_2 h_3$ and $\id_{X_i} - \phi_i = f_i h_{i+1} + h_i f_{i-1}$ for $i = 3, \dots, n-1$ are radical morphisms, this shows that $\phi_i$ is an isomorphism for $i = 2, \dots, n-1$.
      Hence, we have shown the lemma under our additional assumption $X_i = X'_i$ and $f_i = g_i$ for $i = 2, \dots, n-1$.
      
      Now to the general case:
      Using that $g_i$ is a weak cokernel of $g_{i-1}$ for all $i = 1, \dots, n-1$ it is easy to construct the dashed morphisms $\{\phi'_{k} \colon X'_k \to X_k \}_{k=2}^n$ making the diagram in \cref{figure:weak-cokernels-2}
      \begin{figure}[h]
         \includepdf{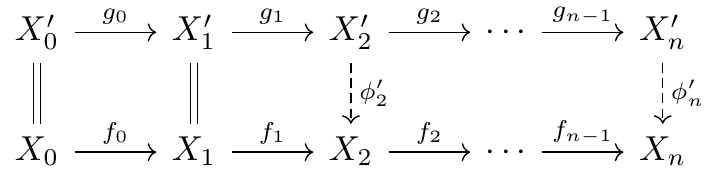}
         \caption{Construction of morphisms in the opposite direction as in \cref{figure:weak-cokernels}.}
         \label{figure:weak-cokernels-2}
      \end{figure}
      commutative. 
      By applying the already proven version of the lemma to the composite of the vertical morphisms of \cref{figure:weak-cokernels} and \cref{figure:weak-cokernels-2} we obtain that $\phi_i \phi'_i$  and $\phi'_i \phi_i$ are isomorphisms for $i = 2, \dots, n-1$ and hence the general version of the lemma follows. 
   \end{proof}

   \section{Core lemmas}
   \subsection{The completion lemma}
   
   We start with the observation that the axiom (F3) from \cite[definition 2.1]{geiss_n-angulated_2013} implies a stronger version of itself.
   Notice, the case $i = 1$ in the following lemma is precisely the axiom (F3):

   \begin{lemma}[Completion lemma] \label{lemma:completion-lemma}
      Suppose given $(n+2)$-angles $X$ and $Y$ as well as $i+1$ morphisms $\{\phi_k \colon X_k \to Y_k\}_{k=0}^i$ which make the diagram
      \includepdf{tikz/N3-asterik-axiom.pdf}
      commutative. 
      If only one map $\phi_0 \colon X_0 \to Y_0$ is given, assume further $(\Sigma_n^{-1} f_{n+1}) \phi_0 g_0 =0$.
      Then there are morphisms $\{\phi_k \colon X_k \to Y_k\}_{k=i+1}^{n+1}$ such that $\phi = (\phi_0,\dots,\phi_{n+1}) \colon X \to Y$ is a morphism of $(n+2)$-angles.
   \end{lemma}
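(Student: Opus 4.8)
The plan is to bootstrap from axiom (F3) — which is exactly the case $i=1$ — and to reduce all other cases to it. If only $\phi_0 \colon X_0 \to Y_0$ is given (the case $i=0$), the extra hypothesis is precisely what is needed to manufacture $\phi_1$: rotating $X$ exhibits $f_0$ as a weak cokernel of $f_{-1} = \Sigma_n^{-1} f_{n+1}$, so the map $\phi_0 g_0 \colon X_0 \to Y_1$ factors as $\phi_0 g_0 = f_0 \phi_1$ for some $\phi_1 \colon X_1 \to Y_1$ exactly when $(\Sigma_n^{-1} f_{n+1})\phi_0 g_0 = 0$. Choosing such a $\phi_1$ reduces this case to $i=1$, which is (F3) verbatim. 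So it remains to treat $i \geq 2$.

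For $i \geq 2$ I would first apply (F3) to the commutative square formed by $\phi_0,\phi_1$ to obtain a full morphism of $(n+2)$-angles $\psi = (\phi_0,\phi_1,\psi_2,\dots,\psi_{n+1}) \colon X \to Y$. This $\psi$ agrees with the prescribed data at positions $0$ and $1$, but its components $\psi_2,\dots,\psi_i$ need not coincide with the given $\phi_2,\dots,\phi_i$. The idea is to correct $\psi$ by a morphism of angles which vanishes in positions $0,1$ and realizes exactly the discrepancy in positions $2,\dots,i$. Accordingly, set $\delta_k := \phi_k - \psi_k$ for $k=0,\dots,i$, so that $\delta_0 = \delta_1 = 0$ and, by commutativity of the squares for both $\phi$ and $\psi$, one has $f_{k-1}\delta_k = \delta_{k-1}g_{k-1}$ for $k=2,\dots,i$.

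Next I would construct maps $s_k \colon X_k \to Y_{k-1}$ with $s_2 = 0$ satisfying $\delta_k = f_k s_{k+1} + s_k g_{k-1}$ for $k=2,\dots,i$, proceeding inductively exactly as in the homotopy construction in the proof of \cref{lemma:weak-cokernels}. For the base step $f_1 \delta_2 = \delta_1 g_1 = 0$, so since $f_2$ is a weak cokernel of $f_1$ we may write $\delta_2 = f_2 s_3$. For the inductive step, having built $s_2,\dots,s_k$, the chain relation together with the previous identity gives $f_{k-1}(\delta_k - s_k g_{k-1}) = \delta_{k-1}g_{k-1} - \delta_{k-1}g_{k-1} = 0$, and since $f_k$ is a weak cokernel of $f_{k-1}$ this yields $s_{k+1}$ with $\delta_k - s_k g_{k-1} = f_k s_{k+1}$. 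Extending $s$ by zero in the remaining positions and setting $\theta_k := f_k s_{k+1} + s_k g_{k-1}$ for all $k$, a direct computation using $f_k f_{k+1} = 0$ and $g_{k-1}g_k = 0$ gives $f_k \theta_{k+1} = f_k s_{k+1} g_k = \theta_k g_k$, so $\theta \colon X \to Y$ is a morphism of $(n+2)$-angles with $\theta_k = \delta_k$ for $k=0,\dots,i$. Then $\phi' := \psi + \theta$ is a morphism of $(n+2)$-angles with $\phi'_k = \phi_k$ for $k=0,\dots,i$, and its components in positions $i+1,\dots,n+1$ furnish the desired completion.

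The main obstacle I anticipate is the behaviour at the connecting morphism. The homotopy maps $s_3,\dots,s_{i+1}$ and the verification that $\theta$ is a morphism of $(n+2)$-angles must be controlled at the wrap-around position, where $\Sigma_n$ enters, which is delicate when $i$ is close to $n+1$. In particular the extreme case $i=n+1$, in which all $\phi_k$ are already prescribed and the only remaining content is commutativity of the final $\Sigma_n$-twisted square, should be isolated and argued separately rather than through the homotopy above.
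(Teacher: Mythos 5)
Your argument for $0 \leq i \leq n$ is correct and is in essence the paper's own proof: both reduce $i=0$ to $i=1$ via exactness of $(n+2)$-angles, take $i=1$ to be (F3) verbatim, and for $i \geq 2$ correct an (F3)-completion of $(\phi_0,\phi_1)$ by a morphism of $(n+2)$-angles that vanishes in the prescribed positions and is manufactured from weak-cokernel factorizations. The only difference is bookkeeping: the paper runs an induction on $i$, at each step correcting the single position $i+1$ by a morphism supported in positions $i+1$ and $i+2$ (there $\psi_{i+1}=f_{i+1}h_{i+2}$ and $\psi_{i+2}=h_{i+2}g_{i+1}$), whereas you assemble all corrections at once into the null-homotopic morphism $\theta_k=f_k s_{k+1}+s_k g_{k-1}$; unrolling the paper's induction produces a correction of exactly this form, so the two computations coincide.

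The one point you leave open, the case $i=n+1$, is not an obstacle but a matter of reading the statement. When $i=n+1$ the commutative diagram in the hypothesis includes the wrap-around square, whose right-hand vertical arrow is $\Sigma_n\phi_0$, so $(\phi_0,\dots,\phi_{n+1})$ is by hypothesis already a morphism of $(n+2)$-angles; the paper dismisses this case with precisely this remark, and no homotopy or separate argument is needed. Your instinct that the final $\Sigma_n$-twisted square cannot be deduced from the other squares is nevertheless sound: it genuinely cannot. For $Z\neq 0$ consider the $(n+2)$-angle $\triv_0(Z)\oplus\triv_{n+1}(\Sigma_n Z)$ and the endomorphism of it given by $\phi_0$, $\phi_1$ the identities and $\phi_2=\dots=\phi_{n+1}=0$; then every square commutes except the wrap-around one, since $\phi_{n+1}g_{n+1}=0$ while $f_{n+1}\Sigma_n\phi_0=f_{n+1}\neq 0$. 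So if the final square were omitted from the hypothesis, the statement for $i=n+1$ would simply be false; with the paper's reading it is vacuous, and with that observation your proof is complete.
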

   \begin{proof}
      Let $i+1$ be the number of maps given.
      Notice, for $i = n+1$ there is nothing to show, as $\phi=(\phi_0, \dots, \phi_{n+1})$ is then already a morphism of $(n+2)$-angles.
      Further, for $i = 1$ the lemma is precisely the axiom (F3) and therefore holds.
      For $i = 0$, by the additional assumption and because $(n+2)$-angles are exact, there is a map $\phi_1 \colon X_1 \to Y_1$ with $\phi_0 g_0 = f_0 \phi_1$.
      But then $\phi_0$ and $\phi_1$ satisfy the requirement of the lemma for $i = 1$ and in this case the completion has already been established.
      We show the remaining cases by induction on $i$.
      As we already established the start of the induction, suppose the lemma is true for some fixed $1 \leq i \leq n-1$.
      We show that we can complete any $i+2$ morphisms $\{\phi_k \colon X_k \to Y_k\}_{k=0}^{i+1}$ to a morphism of $(n+2)$-angles, as well:

      By the induction hypothesis, we can apply the lemma to the morphisms $\{\phi_k\}_{k=0}^i$ and obtain a morphism $\phi' \colon X \to Y$ of $(n+2)$-angles satisfying $\phi_k = \phi'_k$ for $0 \leq k \leq i$.
      We obtain a commutative diagram
      \includepdf{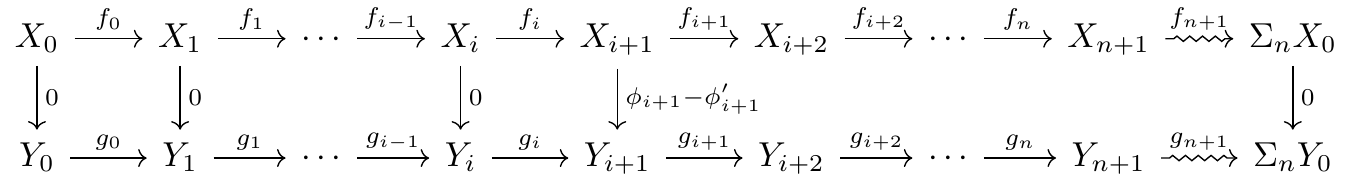}
      by subtraction of $\phi'_k$ from $\phi_k$ for $0 \leq k \leq i+1$. 
      Therefore, $f_i(\phi_{i+1} - \phi'_{i+1}) = 0$ and because $(n+2)$-angles are exact, this shows that there is a morphism $h_{i+2} \colon X_{i+2} \to Y_{i+1}$ with $\phi_{i+1} - \phi'_{i+1} = f_{i+1} h_{i+2}$.
      Hence, $f_{i+1} h_{i+2} g_{i+1} = (\phi_{i-1} -\phi'_{i+1}) g_{i+1}$ and $h_{i+2} g_{i+1} g_{i+2} = 0$, so the diagram
      \includepdf{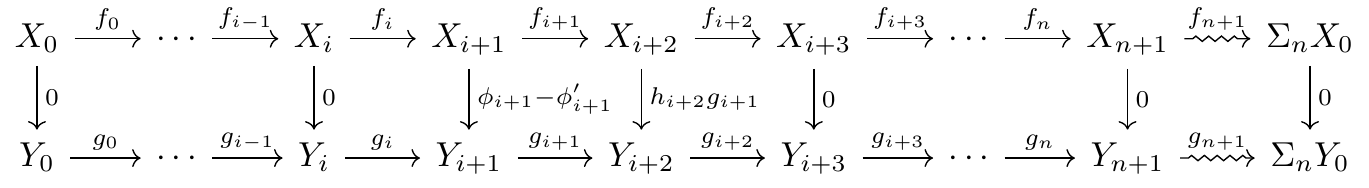}
      is commutative.
      This yields a morphism $\psi \colon X \to Y$ of $(n+2)$-angles defined by $\psi_k = 0$ for $k \notin \{i+1, i+2\}$ and $\psi_{i+1} = \phi_{i+1} - \phi'_{i+1}$ as well as $\psi_{i+2} = h_{i+2} g_{i+1}$.
      Notice, since $\phi'_k = \phi_k$ for $0 \leq k \leq i$ this means that the diagram
      \includepdf{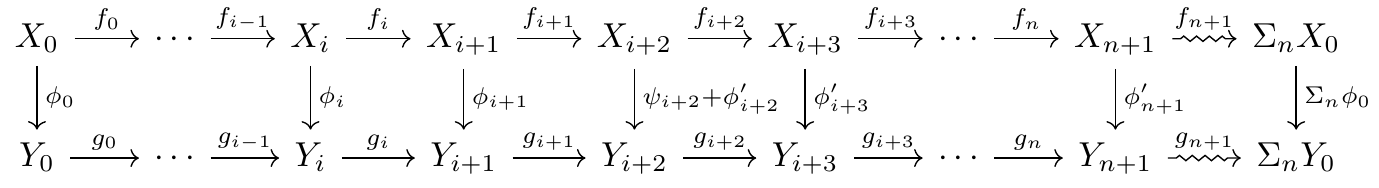}
      which is obtained by the summation of $\psi$ and $\phi'$, is commutative.
      Therefore $\psi + \phi'$ is the desired completion of the given $\{\phi_{k}\}_{k=0}^{i+1}$ to a morphism of $(n+2)$-angles and the lemma hence follows by induction.
   \end{proof}

   One immediate consequence of the completion \cref{lemma:completion-lemma} is the following, which is a generalization of \cite[corollaire 1.1.10(ii)]{beilinson_faisceau_pervers_82} to the context of $(n+2)$-angulated categories.
   It will later establish the isomorphism $\YExt_{(\sA, \sE_{\sA})}^n(A_{n+1}, A_0) \to \Hom_{\sF}(A_{n+1}, \Sigma_n A_0)$ for $A_0, A_{n+1} \in \sA$, similarly to \cite[proposition 2.5(ii)]{joergensen_simple_minded_2020}.

   \begin{lemma} \label{lemma:connecting-morphism-is-unique}
      Suppose given a commutative diagram
      \includepdf{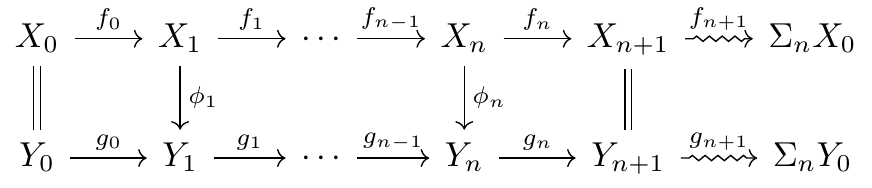}
      with $(n+2)$-angles as rows and $\Hom(\Sigma_n X_0, Y_{n+1}) = 0$.
      Then $f_{n+1} = g_{n+1}$.
   \end{lemma}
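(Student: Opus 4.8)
The plan is to reduce everything to showing that the difference $\delta := f_{n+1} - g_{n+1} \colon X_{n+1} \to Y_{n+1}$ vanishes. Write the two rows as $(n+2)$-angles $X_0 \xrightarrow{x_0} \cdots \xrightarrow{x_n} X_{n+1} \xrightarrow{x_{n+1}} \Sigma_n X_0$ and $Y_0 \xrightarrow{y_0} \cdots \xrightarrow{y_n} Y_{n+1} \xrightarrow{y_{n+1}} \Sigma_n Y_0$, and let $\phi_0, \dots, \phi_n$ denote the common vertical maps in positions $0$ through $n$, so that both $(\phi_0, \dots, \phi_n, f_{n+1})$ and $(\phi_0, \dots, \phi_n, g_{n+1})$ are morphisms of $(n+2)$-angles.

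The only input I would draw from the diagram is the commutative square in position $n$. For each of the two completions it reads $x_n f_{n+1} = \phi_n y_n = x_n g_{n+1}$, and subtracting gives $x_n \delta = 0$; the square in position $n+1$ is not needed. Since $X$ is an $(n+2)$-angle it is exact, so by the dual of the factorization property recalled in \cref{sec:preliminaries} --- namely that $x_{n+1}$ is a weak cokernel of $x_n$ --- the relation $x_n \delta = 0$ forces $\delta$ to factor through the connecting morphism, say $\delta = x_{n+1}\alpha$ for some $\alpha \colon \Sigma_n X_0 \to Y_{n+1}$. The hypothesis $\Hom_{\sF}(\Sigma_n X_0, Y_{n+1}) = 0$ then yields $\alpha = 0$, hence $\delta = 0$ and $f_{n+1} = g_{n+1}$.

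This is exactly the $(n+2)$-angulated analogue of the uniqueness half of \cite[corollaire 1.1.10(ii)]{beilinson_faisceau_pervers_82}, and --- in contrast to the existence of a completion --- it does \emph{not} invoke the completion \cref{lemma:completion-lemma}, since here both completions are already given. I expect no genuine obstacle: the argument is purely formal once the difference trick and the correct exactness instance are in place. The only points requiring care are the bookkeeping of positions and of the left-to-right composition convention, so that the weak-cokernel property is applied at position $n+1$ (with respect to the preceding map $x_n$) and the factoring morphism is seen to lie precisely in the $\Hom$-group assumed to vanish.
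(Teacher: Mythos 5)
Your proposal proves a different statement from the one in the lemma, and the difference is exactly where the real work lies. In the paper's notation, $f_{n+1} \colon X_{n+1} \to \Sigma_n X_0$ and $g_{n+1} \colon Y_{n+1} \to \Sigma_n Y_0$ are the \emph{connecting morphisms} of the two rows, i.e.\ the morphisms in position $n+1$ of the $(n+2)$-angles $X$ and $Y$ themselves --- not two candidate vertical maps $X_{n+1} \to Y_{n+1}$. (This is forced by the paper's conventions, by the description in the introduction ``the connecting morphism $f_{n+1}$ of $X$ is uniquely determined by $f_0,\dots,f_n$'', and by the way the lemma is applied in \cref{lemma:Yext-isomorphism}.) The hypothesis is only a commutative ladder with identities at the end terms $X_0 = Y_0$ and $X_{n+1} = Y_{n+1}$ and arbitrary maps $\phi_1,\dots,\phi_n$ in between, covering the squares built on $f_0,\dots,f_n$ and $g_0,\dots,g_n$; \emph{no} commutativity involving $f_{n+1}$ or $g_{n+1}$ is given. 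So your premise that ``both completions are already given'' as morphisms of $(n+2)$-angles is false, and the statement you do prove --- that two maps $X_{n+1} \to Y_{n+1}$ which both make the penultimate square commute must coincide when $\Hom_{\sF}(\Sigma_n X_0, Y_{n+1}) = 0$ --- is precisely the auxiliary claim inside the paper's proof, not the lemma itself. Your argument for that claim is correct.

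The gap shows up if you run your difference trick on the actual connecting morphisms: set $\delta = f_{n+1} - g_{n+1} \colon X_{n+1} \to \Sigma_n X_0$. One does get $f_n \delta = 0$ (since $f_n f_{n+1} = 0$ and $f_n g_{n+1} = \phi_n g_n g_{n+1} = 0$ using the penultimate square), but the weak-cokernel property of $f_{n+1}$ then only yields a factorization $\delta = f_{n+1}\alpha$ with $\alpha \in \End(\Sigma_n X_0)$ --- a group that certainly does not vanish --- so the hypothesis $\Hom_{\sF}(\Sigma_n X_0, Y_{n+1}) = 0$ never enters and the argument stalls. This is why the paper \emph{does} invoke the completion \cref{lemma:completion-lemma} (i.e.\ axiom (F3)), contrary to your closing remark: it first produces \emph{some} $\phi_{n+1}$ making $(\id_{X_0}, \phi_1, \dots, \phi_n, \phi_{n+1}) \colon X \to Y$ a genuine morphism of $(n+2)$-angles, then applies your uniqueness argument to conclude $\phi_{n+1} = \id_{X_{n+1}}$ (both it and the identity make the penultimate square commute), and only then reads off $f_{n+1} = g_{n+1}$ from the now-commutative last square. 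Your proposal is missing exactly this completion step.
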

   \begin{proof}
   The undashed maps of
   \includepdf{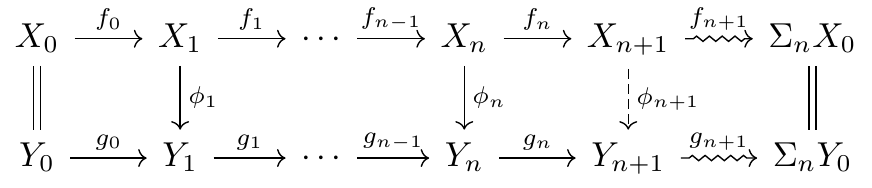}
   clearly form a commutative diagram.
   We claim there is at most one $\phi_{n+1} \colon X_{n+1} \to Y_{n+1}$ which makes the penultimate square commute, i.e.\ satisfies $f_n \phi_{n+1} = \phi_n g_n$:
   Suppose we are given two maps $\phi_{n+1} \colon X_{n+1} \to Y_{n+1}$ and $\phi'_{n+1} \colon X_{n+1} \to Y_{n+1}$, both having this property.
   Then we obtain $f_n (\phi_{n+1} - \phi'_{n+1}) = 0$ and hence, using that the upper row is an exact $n$-$\Sigma_n$-sequence, there is a morphism $h \colon \Sigma_n X_0 \to Y_{n+1}$ with $f_{n+1} h = \phi_{n+1} - \phi'_{n+1}$.
   However, $\Hom(\Sigma_n X_0, Y_{n+1}) = 0$ by assumption, hence $h = 0$ and $\phi_{n+1} = \phi'_{n+1}$.
   This shows that there is at most one $\phi_{n+1}$ making the penultimate square commute.

   By the completion lemma, there is a choice of $\phi_{n+1}$ which makes the whole diagram commutative.
   But by the previous claim, this choice is unique and, by requirement of the lemma, its the identity on $X_{n+1} = Y_{n+1}$.
   This means that 
   \includepdf{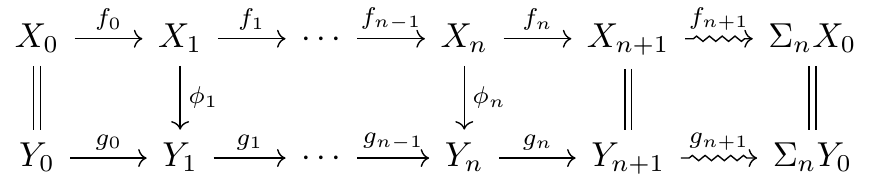}
   is a commutative diagram, which shows $f_{n+1} = g_{n+1}$.
   \end{proof}
  
   \subsection{Obtaining information from (N4*)} \label{subsection:about_N4}
   Throughout \cref{subsection:about_N4} we assume that $(\sF,\Sigma_n,\pentagon)$ is an $(n+2)$-angulated Krull-Schmidt category.
   Recall from \cite[theorem 4.4]{bergh_axioms_2013} that given any commutative diagram as in \cref{figure:N4-asterik-axiom-1}
   \begin{figure}[h]
      \includepdf{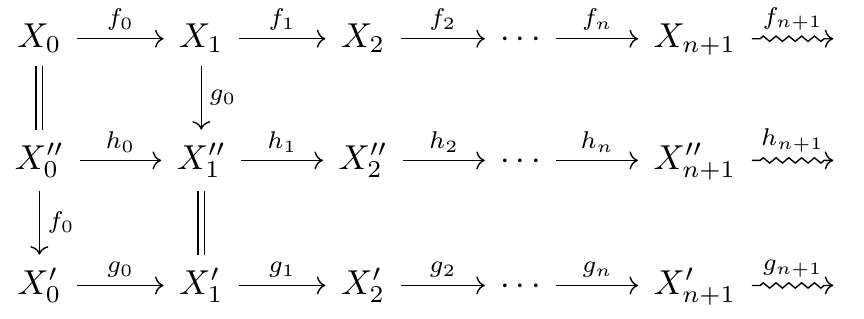}
      \caption{A factorization $h_0 = f_0 g_0$ and the three $(n+2)$-angles arising from the morphisms involved. Notice that $X_0' = X_1$.}
      \label{figure:N4-asterik-axiom-1}
   \end{figure}
   with all three rows being $(n+2)$-angles, we can find the dashed morphisms of \cref{figure:N4-asterik-axiom-2}
   \begin{figure}[h]
      \includepdf{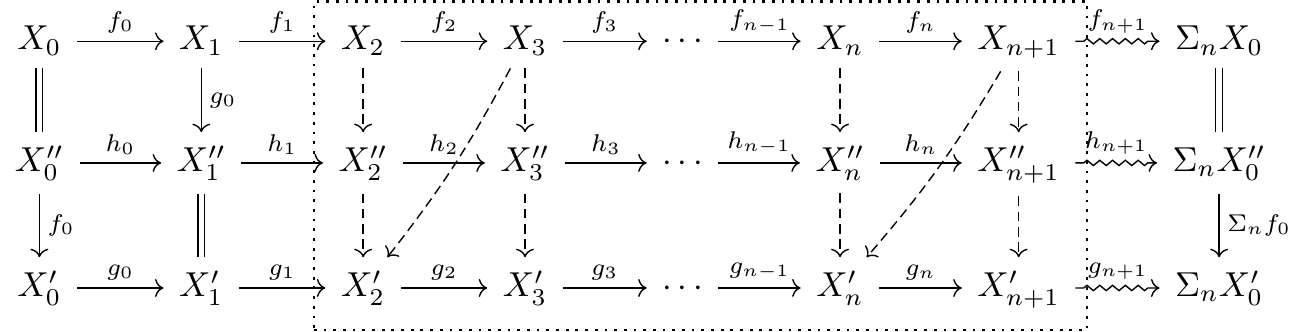}
      \caption{Morphisms arising from \cref{figure:N4-asterik-axiom-1} via (N4*).}
      \label{figure:N4-asterik-axiom-2}
   \end{figure}
   such that each upright square commutes and so that the (Mayer-Vietoris-like) totalisation of the complex enclosed by the dashed rectangle in \cref{figure:N4-asterik-axiom-2}, shown in \cref{figure:N4-asterik-axiom-totalisation} 
   \begin{figure}[h]
      \includepdf{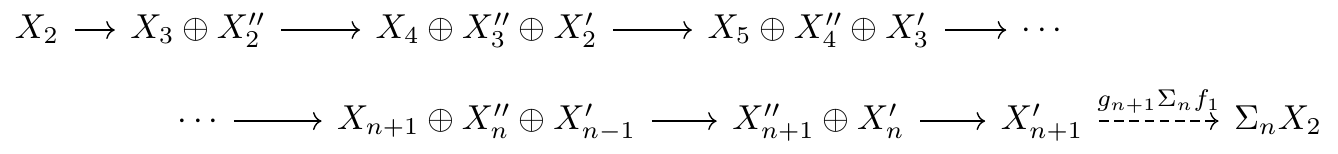}
      \caption{Totalisation obtained from \cref{figure:N4-asterik-axiom-2}.}
      \label{figure:N4-asterik-axiom-totalisation}
   \end{figure}
   is an $(n+2)$-angle.
   In this section we want to tackle the question, what information about $X_2'',\dots, X_{n+1}''$ can be obtained from this $(n+2)$-angle?
   Notice, for $n > 1$ the completion of a morphism to an $(n+2)$-angle is not unique, as we can always add a trivial $(n+2)$-angle to an existing completion to obtain another one.
   To circumvent this problem we reintroduce the following from \cite[lemma 5.18]{oppermann_higher-dimensional_2012} and \cite[lemma 3.14]{Fedele_2019}, which adds uniqueness up to isomorphism to $(n+2)$-angles:
   \begin{definition} \label{definition:minimal-n+2-angle}
      Suppose given an $(n+2)$-angle $X$ of the shape
      \includepdfdot{tikz/n-plus-2-angle-generic.pdf}
      Then $X$ is called 
      \begin{enumerate}
         \item a \emph{minimal completion of $f_0$} if $f_i \in \rad(X_i, X_{i+1})$ for $i = 2,\dots,n$ and
         \item a \emph{minimal $(n+2)$-angle for $f_i$} if its rotation  \label{enum:minimal-n+2-angle}
         \includepdf{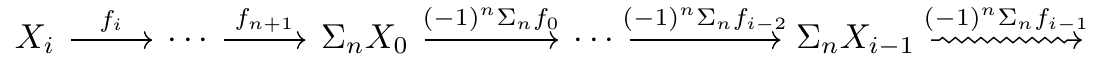}
         is a minimal completion of $f_i$.
      \end{enumerate}
   \end{definition}
   Notice, since the rotation of an $(n+2)$-angle is an $(n+2)$-angle again it does not really matter in what position $i$ the morphism $f_i$ is in (\ref{enum:minimal-n+2-angle}).
   For sake of convenience and less heavy notation, we will state and prove the lemmas regarding minimal $(n+2)$-angles for minimal completions only and leave the rotated versions for minimal $(n+2)$-angles as an easy exercise for the reader.

   Most of the properties of minimal $(n+2)$-angles can be shown without the Krull-Schmidt property of $\sF$, but we need it for the existence of minimal $(n+2)$-angles and hence to use the main result of this section:
   A lemma, which enables us to compare the objects of the $(n+2)$-angle arising as in \cref{figure:N4-asterik-axiom-totalisation} and the objects of the minimal $(n+2)$-angle arising from the morphism $g_{n+1} \Sigma_n f_1$ in position $n+1$ of this $(n+2)$-angle.
   \begin{lemma} \label{lemma:comparison-lemma}
      Suppose $n \geq 3$ and we are given a commutative diagram as in \cref{figure:N4-asterik-axiom-1}, where the second row is additionally a minimal $(n+2)$-angle for $h_0$.
      Further let
      \includepdf{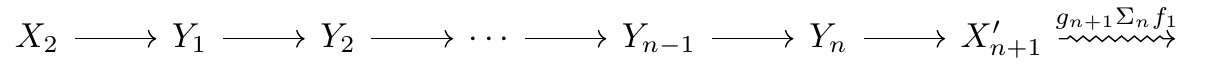}
      be a minimal $(n+2)$-angle for $g_{n+1}\Sigma_n f_1$.
      Then there are objects $\{Z_k\}_{k=1}^{n-1}$ satisfying
      \begin{enumerate}
         \item $Z_1 \in \add(X_3 \oplus X_4 \oplus X'_2)$, \label{enum:begin-Z-add-property}
         \item $Z_k \in \add(X_{k+2} \oplus X'_{k} \oplus X_{k+3} \oplus X'_{k+1})$ for $2 \leq k \leq n-2$ and
         \item $Z_{n-1} \in \add(X_{n+1} \oplus X'_{n-1} \oplus X'_{n})$, \label{enum:end-Z-add-property}
      \end{enumerate}
      expressing the difference between the objects of \cref{figure:N4-asterik-axiom-totalisation} and the minimal $(n+2)$-angle of $g_{n+1} \Sigma_n f_1$ depicted above, in the sense that
      \begin{enumerate}[resume]
         \item $X_3 \oplus X_2'' = Y_1 \oplus Z_1$, \label{enum:begin-sum-property}
         \item $X_{k+2} \oplus X_{k+1}'' \oplus X'_k = Z_{k-1} \oplus Y_k \oplus Z_k$ for $2 \leq k \leq n-1$ and
         \item $X''_{n+1} \oplus X_n' = Z_{n-1} \oplus Y_n $ \label{enum:end-sum-property}
      \end{enumerate}
      hold.
   \end{lemma}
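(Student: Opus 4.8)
The plan is to derive both families of identities from one fact: the totalisation $T$ of \cref{figure:N4-asterik-axiom-totalisation} is an $(n+2)$-angle completing $\delta := g_{n+1}\Sigma_n f_1$ in position $n+1$, and comparing it with its minimal $(n+2)$-angle decomposes it as a minimal part plus trivial angles. First I would record $T$ explicitly. Its position-$0$ object is $\Sigma_n^{-1}$ of the target of $\delta$, namely $X_2$, and its position-$(n+1)$ object is the source $X_{n+1}'$ of $\delta$; these already coincide with the corresponding objects of the minimal $(n+2)$-angle $Y$ with objects $Y_1,\dots,Y_n$. The intermediate objects $T_k$ (for $1\le k\le n$) are the left-hand sides of (\ref{enum:begin-sum-property})--(\ref{enum:end-sum-property}), and I would equip each with the block decomposition $T_k=U_k\oplus D_k\oplus S_k$ induced by the three rows of \cref{figure:N4-asterik-axiom-1}: an \emph{unprimed} block $U_k\in\add X_{k+2}$ from the $f_0$-row, a \emph{double-primed} block $D_k\in\add X_{k+1}''$ from the $h_0$-row, and a \emph{single-primed} block $S_k\in\add X_k'$ from the $g_0$-row, with the truncations $S_1=0$ and $U_n=0$ at the two ends. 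The differential $T_k\to T_{k+1}$ inherits a block-bidiagonal shape from the double-complex in \cref{figure:N4-asterik-axiom-2}, and its double-primed diagonal block $D_k\to D_{k+1}$ is (up to sign) the position-$(k+1)$ morphism $h_{k+1}$ of the second row; since that row is a minimal $(n+2)$-angle for $h_0$, this block lies in $\rad$ for all $k\in\{1,\dots,n-1\}$.

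Next I would obtain the sum-identities (\ref{enum:begin-sum-property})--(\ref{enum:end-sum-property}). After rotating $\delta$ into position $0$, the defining condition of \cref{definition:minimal-n+2-angle} for $Y$ is exactly that the differentials of $T$ in positions $1,\dots,n-1$ are radical. So reducing $T$ to minimal form---by iterated cancellation of isomorphism entries of differentials, each step splitting off a trivial angle---only needs to cancel inside $d_1,\dots,d_{n-1}$, whence it produces $T\cong Y\oplus\bigoplus_{k=1}^{n-1}\triv_k(Z_k)$, where $Z_k$ is the total of the summands cancelled in $d_k$. As $\triv_k(Z_k)$ occupies precisely positions $k$ and $k+1$, the position-$k$ object of $T$ is $Y_k\oplus Z_{k-1}\oplus Z_k$ with the convention $Z_0=Z_n=0$; matching this against the objects $T_k$ recorded above yields (\ref{enum:begin-sum-property})--(\ref{enum:end-sum-property}).

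The real content is (\ref{enum:begin-Z-add-property})--(\ref{enum:end-Z-add-property}): the allowed sets there are precisely the sums of the non-double-primed blocks of $T_k$ and $T_{k+1}$, so it suffices to show that every cancelled indecomposable lies in some non-double-primed block. At each stage I would refine the decomposition of every $T_j$ into indecomposables compatible with $U_j\oplus D_j\oplus S_j$---legitimate because the reduction removes summands and alters differentials but never mixes the three blocks on the level of objects. A Gaussian step then cancels an isomorphism between an indecomposable summand of one block of $T_k$ and one of $T_{k+1}$; were both double-primed, the entry would be a component of the radical block $D_k\to D_{k+1}$ and hence not invertible. Therefore at least one of the two cancelled indecomposables is non-double-primed, so $Z_k$ lies in $\add$ of a non-double-primed block, i.e.\ in the allowed set; the truncations $S_1=0$ and $U_n=0$ explain the shorter lists at $k=1$ and $k=n-1$ (distinct indices precisely because $n\ge3$).

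The main obstacle is to promote the radicality of the double-primed diagonal blocks from the outset of the reduction to every stage of it. I would phrase this as an invariant preserved by each Gaussian step: the differential of $T$ keeps its double-primed diagonal blocks $D_k\to D_{k+1}$ in $\rad$, and any entries it acquires away from the block-bidiagonal shape of \cref{figure:N4-asterik-axiom-2} are likewise radical (they start out zero). The check is finite: a step either deletes a row and a column from two neighbouring differentials---leaving the relevant blocks as submatrices of radical ones---or performs a Schur-complement update, and one verifies that every contribution to a double-primed diagonal block or to an off-bidiagonal block is a composite passing through an already-radical block, hence radical since $\rad$ is an ideal. Granting this invariant no isomorphism can occupy a double-primed diagonal block, the cancellation analysis above applies verbatim at each step, and the lemma follows. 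I expect this radicality-preservation, together with extracting the precise block-bidiagonal form of the totalisation differentials from \cref{figure:N4-asterik-axiom-2}, to be the bulk of the argument.
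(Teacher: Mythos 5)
Your reduction of the sum-identities (\ref{enum:begin-sum-property})--(\ref{enum:end-sum-property}) is correct and matches the paper: the totalisation $T$ is an $(n+2)$-angle with $g_{n+1}\Sigma_n f_1$ in position $n+1$, and splitting off trivial angles (this is exactly \cref{lemma:minimal-n-plus-2-angles-exist}) gives $T \cong Y \oplus \triv_1(Z_1) \oplus \dots \oplus \triv_{n-1}(Z_{n-1})$. The gap sits precisely where you place the bulk of the work, in the radicality-preservation invariant, and your proposed verification of it does not go through. Consider a Gaussian step that cancels an invertible component $\alpha$ of $\psi_{k+2} \colon X_{k+2} \to X'_{k+1}$, i.e.\ an unprimed-to-single-primed entry; this entry belongs to the original pattern of \cref{figure:N4-asterik-axiom-2} (it is not one of the zero blocks), and (N4*) gives no control over it, so such a step cannot be ruled out. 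The Schur-complement update then adds to the double-primed diagonal block $D_k \to D_{k+1}$, that is to the $X''_{k+1} \to X''_{k+2}$ entry, a term of the form (component of $\phi'_{k+1}$)$\,\alpha^{-1}\,$(component of $\phi_{k+2}$). This composite passes through \emph{no} radical entry: neither $\phi_{k+2}$ (unprimed to double-primed) nor $\phi'_{k+1}$ (double-primed to single-primed) is radical in general, so your ``every contribution passes through an already-radical block'' check fails here. After such a step the double-primed diagonal block can contain an isomorphism, a subsequent step can then cancel two double-primed indecomposables against each other, and your step-level assertion --- at least one of the two cancelled summands is non-double-primed --- collapses. (In this scenario the conclusion of the lemma happens to survive, because the offending summand of $X''_{k+1}$ is isomorphic to a summand of $X'_{k+1}$ via the invertible component of $\phi'_{k+1}$; but your induction has no mechanism for remembering such identifications through later steps.)

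The paper bypasses all bookkeeping along the reduction, and this is exactly what its matrix lemma (\cref{lemma:matrix-lemma}) is for. Since $\triv_k(Z_k)$ is a direct summand of the totalisation $T$ itself --- not merely of some intermediate stage --- the identity of $Z_k$ factors through the \emph{original, unreduced} differential $k_k$: if $\iota \colon \triv_k(Z_k) \to T$ and $\pi \colon T \to \triv_k(Z_k)$ split the inclusion, then $\iota_k k_k \pi_{k+1} = \id_{Z_k}$. The double-primed diagonal entry of $k_k$ is $-h_{k+1}$, radical because the middle row of \cref{figure:N4-asterik-axiom-1} is a minimal $(n+2)$-angle for $h_0$, so \cref{lemma:matrix-lemma} applies directly and yields $Z_k \in \add$ of the complementary, non-double-primed summands of $T_k$ and $T_{k+1}$ in one stroke. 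The point is that the factorization is tested against the unreduced matrix, where radicality of the $X'' \to X''$ entry is a hypothesis rather than an invariant to be propagated. Replacing your last two paragraphs by this observation repairs the argument and recovers the paper's proof.
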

   This lemma seems very technical, though it is very useful, too.
   For example it can be used to show that $X''_2,\dots,X''_{n+1}$ lie in an additive $n$-extension closed subcategory of $\sF$ if the upper and lower row of \cref{figure:N4-asterik-axiom-1} do so.

   Notice the case $n=1$ of \cref{lemma:comparison-lemma} is uninteresting, since triangles in a triangulated category are unique up to isomorphism.
   A similar form of \cref{lemma:comparison-lemma} holds for $n=2$. Here (\ref{enum:begin-Z-add-property})-(\ref{enum:end-Z-add-property}) need to be replaced by $Z_1 \in \add(X_3 \oplus X_2')$.

   Our plan of proving this is as follows:
   We show that the $(n+2)$-angles of \cref{figure:N4-asterik-axiom-totalisation} and \cref{lemma:comparison-lemma} differ by a direct sum of trivial complexes.
   The objects of these trivial complexes are our candidates for $\{Z_k\}_{k=1}^{n-1}$.
   We then use the matrix lemma (see \cref{lemma:matrix-lemma}) and $h_2,\dots,h_{n}$ being in the radical to obtain (\ref{enum:begin-Z-add-property})--(\ref{enum:end-Z-add-property}).
   We begin with the first part of this plan:
   
   \begin{lemma} \label{lemma:uniqueness-of-minimal-n+2-angles}
      Let $f_0 \colon X_0 \to X_{1}$ be a morphism.
      Suppose $X$ is a minimal completion 
      \includepdf{tikz/n-plus-2-angle-generic.pdf}
      of $f_0$. 
      Then $X$ is a direct summand of any $(n+2)$-angle containing $f_0$ in position $0$.
      Moreover, the minimal completion of $f_0$ is unique up to isomorphism.
   \end{lemma}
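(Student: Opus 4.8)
The plan is to exhibit $X$ as a retract of an arbitrary completion by building a pair of morphisms of $(n+2)$-angles between them and showing one composite is an automorphism of $X$; everything then reduces to a single claim about endomorphisms of minimal $(n+2)$-angles. Write an arbitrary $(n+2)$-angle containing $f_0$ in position $0$ as $Y = (X_0 \xrightarrow{f_0} X_1 \to Y_2 \to \cdots \to Y_{n+1} \rightsquigarrow)$. The square with $\phi_0 = \id_{X_0}$ and $\phi_1 = \id_{X_1}$ commutes trivially, since both legs equal $f_0$, so the case $i=1$ of the completion \cref{lemma:completion-lemma} (which is just axiom (F3)) furnishes a morphism $\phi \colon X \to Y$ of $(n+2)$-angles with $\phi_0 = \id_{X_0}$ and $\phi_1 = \id_{X_1}$; applying it in the other direction gives $\psi \colon Y \to X$ with $\psi_0 = \id_{X_0}$ and $\psi_1 = \id_{X_1}$. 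Then $\chi := \phi\psi \colon X \to X$ is an endomorphism of the $(n+2)$-angle $X$ with $\chi_0 = \id_{X_0}$ and $\chi_1 = \id_{X_1}$, and the whole statement hinges on the following claim: \emph{any} such $\chi$ is an isomorphism.

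To prove the claim I would set $d_i := \id_{X_i} - \chi_i$ and show $d_i \in \rad(X_i, X_i)$ for every $i$. Since $\sF$ is Krull--Schmidt, $\rad(X_i,X_i)$ is the Jacobson radical of $\End(X_i)$, so $\chi_i = \id_{X_i} - d_i$ is then automatically invertible and $\chi$ is an isomorphism of $(n+2)$-angles. By construction $d_0 = d_1 = 0$, and commutativity of $\chi$ yields $f_i d_{i+1} = d_i f_i$ for $0 \le i \le n$ together with the wrap-around relation $d_{n+1} f_{n+1} = f_{n+1}\,\Sigma_n d_0 = 0$. For the middle positions I would reuse the homotopy construction from the proof of \cref{lemma:weak-cokernels}: since each $f_i$ is a weak cokernel of $f_{i-1}$ by exactness of $(n+2)$-angles and $f_1 d_2 = d_1 f_1 = 0$, one factors $d_2 = f_2 s_3$ and builds $s_{i+1}$ with $d_i = f_i s_{i+1} + s_i f_{i-1}$ inductively for $3 \le i \le n$. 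Minimality of $X$ forces $f_2, \dots, f_n \in \rad$, so each of these expressions lies in $\rad(X_i,X_i)$, giving $d_i \in \rad$ for $2 \le i \le n$.

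The hard part is position $i = n+1$, which the forward homotopy cannot reach: the map $f_{n+1}$ need not be in the radical, so a summand of the form $f_{n+1} s_{n+2}$ is uncontrolled. Here I would instead exploit the wrap-around relation $d_{n+1} f_{n+1} = 0$ (which uses $d_0 = 0$). Because $f_n$ is a weak kernel of $f_{n+1}$, the vanishing $d_{n+1} f_{n+1} = 0$ lets me factor $d_{n+1} = u\,f_n$ for some $u \colon X_{n+1} \to X_n$, and $f_n \in \rad$ (minimality, as $2 \le n \le n$) then forces $d_{n+1} \in \rad$. This is the key observation that makes the otherwise delicate top position fall into line, and it completes the proof that every $d_i$ is radical, hence that $\chi$ is an isomorphism.

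With the claim in hand both assertions follow formally. For the first, $\chi = \phi\psi \in \Aut(X)$ means $\phi$ is a split monomorphism of $(n+2)$-angles with retraction $\psi\chi^{-1}$, since $\phi\,(\psi\chi^{-1}) = \chi\chi^{-1} = \id_X$; hence $X$ is a direct summand of $Y$. For uniqueness, take $Y = X'$ a second minimal completion and obtain $\phi \colon X \to X'$ and $\psi \colon X' \to X$ that are the identity in positions $0$ and $1$; now both composites $\phi\psi \in \End(X)$ and $\psi\phi \in \End(X')$ are automorphisms by the claim (using minimality of $X$ and of $X'$ respectively), so $\phi$ admits the right inverse $\psi(\phi\psi)^{-1}$ and the left inverse $(\psi\phi)^{-1}\psi$ and is therefore an isomorphism, giving $X \cong X'$.
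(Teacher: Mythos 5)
Your proposal follows the same strategy as the paper's own proof: use (F3) to produce morphisms $\phi \colon X \to Y$ and $\psi \colon Y \to X$ that are the identity in positions $0$ and $1$, then show the resulting endomorphism of $X$ is an isomorphism by writing each component as identity minus a radical morphism. The paper packages that second step as an application of \cref{lemma:weak-cokernels} and its dual (whose proofs are exactly your homotopy construction), whereas you inline the homotopy argument for positions $2,\dots,n$ and treat position $n+1$ directly via the wrap-around relation $d_{n+1}f_{n+1}=0$ together with $f_n$ being a weak kernel of $f_{n+1}$; this is a clean substitute for invoking the dual lemma. Your formal derivation of both conclusions from the claim (retraction $\psi\chi^{-1}$ for the summand statement; two-sided inverses for uniqueness) is also correct, modulo the same silent use of idempotent splitting that the paper makes when passing from ``retract'' to ``direct summand''.

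The one genuine gap is the case $n=1$, which your argument silently excludes: your justification that $f_n$ is radical reads ``$2 \le n \le n$'', which is false for $n=1$. Indeed, for $n=1$ minimality is a vacuous condition (every triangle is a minimal completion), so $f_1 \in \rad$ is simply not available, and your factorization $d_2 = u f_1$ then gives no radical control on $d_2$. The claim is still true in that case---an endomorphism of a triangle which is the identity in two positions is an isomorphism in the third by the five lemma applied to the long exact $\Hom$-sequences, which is why triangles over a fixed morphism are unique up to isomorphism---and this classical fact is exactly how the paper dispatches $n=1$ in one sentence before assuming $n \ge 2$. To repair your write-up, either state the standing assumption $n \ge 2$ and handle $n=1$ by the classical argument, or add the five-lemma argument for the top position when $n=1$.
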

   \begin{proof}
      In a triangulated category the lemma is clearly true, as triangles are, up to isomorphism, uniquely determined by one morphism.
      Hence, we can assume that $n \geq 2$.

      Suppose $Y$ is an arbitrary $(n+2)$-angle
      \includepdf{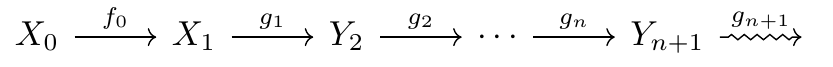}
      containing $f_0$ in the same position as $X$.
      Using the axiom \cite[defintion 2.1(F3)]{geiss_n-angulated_2013} we find the dashed morphisms $\phi_2, \dots, \phi_{n+1}$ and $\phi'_2, \dots, \phi'_{n+1}$ making
      \includepdf{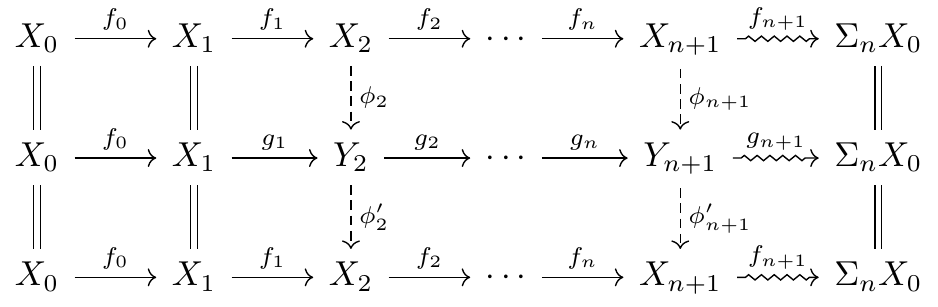}
      a commutative diagram.
      For short, that is to say, we have morphisms of $(n+2)$-angles $\phi = (\id_{X_0}, \id _{X_1}, \phi_2, \dots, \phi_{n+1}) \colon X \to Y$ and $\phi' = (\id_{X_0}, \id _{X_1}, \phi'_2, \dots, \phi'_{n+1}) \colon Y \to X$.
      Now by \cref{lemma:weak-cokernels} we know that $\phi_2 \phi'_2, \dots, \phi_{n} \phi'_n$ are isomorphisms.
      By the dual of \cref{lemma:weak-cokernels}, we obtain that $\phi_3\phi'_3, \dots, \phi_{n+1} \phi'_{n+1}$ are isomorphisms, hence $\phi \phi'$ is an isomorphism and $X$ is a direct summand of $Y$.

      Now if $Y$ is additionally a minimal $(n+2)$-angle for $f_0$ then the same argument using \cref{lemma:weak-cokernels} and its dual shows that $\phi$ has to be an isomorphism already.
      This shows that any two minimal $(n+2)$-angles of $f_0$ are isomorphic.
   \end{proof}

   \begin{lemma}\label{lemma:minimal-n-plus-2-angles-exist}
      Each morphism $f_0 \colon X_0 \to X_1$ has a minimal completion.
      Moreover, any completion of $f_0$ to an $(n+2)$-angle is the direct sum of the minimal completion of $f_0$ and trivial $(n+2)$-angles $\triv_2 (Z_2), \dots, \triv_{n}(Z_{n})$ for some $Z_2, \dots, Z_{n} \in \sF$.
   \end{lemma}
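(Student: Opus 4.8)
The plan is to establish both assertions simultaneously by peeling trivial summands off an arbitrary completion, one at a time, until the interior positions are radical; termination is then forced by the Krull--Schmidt property. The axioms of a pre-$(n+2)$-angulated category guarantee that $f_0$ extends to \emph{some} $(n+2)$-angle $Y$, so the content is to improve $Y$ to a minimal completion. The triangulated case $n=1$ is immediate from uniqueness of triangles: the interior range $2,\dots,n$ is empty, every completion is already minimal, and the list of trivial summands is empty. Hence I would assume $n \geq 2$ from here on.

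The core is the following splitting claim: if $Y$ is a completion of $f_0$ and $f_j \notin \rad(Y_j,Y_{j+1})$ for some $j \in \{2,\dots,n\}$, then $Y$ is isomorphic to $\triv_j(Z) \oplus Y'$ for a nonzero $Z \in \sF$ and a completion $Y'$ of $f_0$ with the \emph{same} $f_0$ in position $0$. To prove it I would invoke the standard Krull--Schmidt fact that a non-radical morphism $f_j$ admits decompositions $Y_j = Y_j'' \oplus Z$ and $Y_{j+1} = Y_{j+1}'' \oplus Z$, with $Z$ nonzero, for which $f_j = \begin{pmatrix} f_j'' & 0 \\ 0 & \id_Z \end{pmatrix}$. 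These base changes are realized by automorphisms of $Y_j$ and $Y_{j+1}$ only; since $j \geq 2$ the positions $0,1$ are untouched, so by the replacement \cref{lemma:replacement_lemma} the base-changed sequence is again an $(n+2)$-angle completing the same $f_0$. The decisive point is that the relations $f_{j-1}f_j = 0$ and $f_j f_{j+1} = 0$ force the $Z$-components of the neighbouring maps to vanish: writing $f_{j-1} = (f_{j-1}'',\alpha)$ and $f_{j+1} = \binom{f_{j+1}''}{\beta}$ relative to the summand $Z$, the block form of $f_j$ makes the $Z$-component of $f_{j-1}f_j$ equal to $\alpha$ and that of $f_j f_{j+1}$ equal to $\beta$, whence $\alpha = 0$ and $\beta = 0$. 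Thus $\triv_j(Z)$ is a genuine direct summand of the $n$-$\Sigma_n$-sequence, and as $\pentagon$ is closed under direct summands the complement $Y'$ is again an $(n+2)$-angle.

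Granting the claim, I would iterate: as long as some interior map lies outside the radical, split off a nonzero $\triv_j(Z)$. Each split strictly decreases $\sum_{i=0}^{n+1}(\text{number of indecomposable summands of } Y_i)$, a nonnegative integer, so the process terminates; at termination every $f_i$ with $2 \leq i \leq n$ is radical, i.e.\ we have reached a minimal completion $Y_{\min}$, which proves existence. Collecting the trivial summands produced and grouping them by position, using $\triv_j(Z)\oplus\triv_j(Z') = \triv_j(Z\oplus Z')$, yields $Y \cong \triv_2(Z_2) \oplus \cdots \oplus \triv_n(Z_n) \oplus Y_{\min}$, which is exactly the asserted decomposition; that $Y_{\min}$ is \emph{the} minimal completion (unique up to isomorphism) is already \cref{lemma:uniqueness-of-minimal-n+2-angles}.

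The main obstacle is the splitting claim, and within it the automatic vanishing of $\alpha$ and $\beta$: this is precisely what separates splitting off a trivial direct summand of the entire $(n+2)$-angle from merely factoring a single morphism, and it is where the consecutive-composition-zero property of $(n+2)$-angles is essential. The remaining ingredients --- existence of an initial completion, the termination count, and the regrouping of trivial summands --- are routine Krull--Schmidt bookkeeping.
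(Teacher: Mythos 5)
Your proposal is correct and follows essentially the same route as the paper: split off a nonzero trivial $(n+2)$-angle $\triv_j(Z)$ at any non-radical interior position (using that $\pentagon$ is closed under direct summands), iterate until all of $f_2, \dots, f_n$ are radical with termination guaranteed by Krull--Schmidt finiteness, and finally collect the trivial summands via $\triv_j(Z) \oplus \triv_j(Z') = \triv_j(Z \oplus Z')$. The only difference is cosmetic, namely how the splitting step is realized: the paper exhibits morphisms of $(n+2)$-angles $\triv_i(Z) \to X \to \triv_i(Z)$ whose composite is an isomorphism, starting from $g, h$ with $g f_i h$ invertible, whereas you block-diagonalize $f_j$ by base change and use $f_{j-1} f_j = 0 = f_j f_{j+1}$ to kill the neighbouring components --- the same idea in matrix form.
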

   \begin{proof}
      As each morphism in $\sF$ has a completion to an $(n+2)$-angle the first part of the lemma follows from the second part.

      We show the second part:
      Suppose we are given an $(n+2)$-angle $X$ of the form
      \includepdf{tikz/n-plus-2-angle-generic.pdf}
      containing $f_0$.
      Suppose $f_i$ is not in the radical for some $i = 2,\dots, n$.
      By definition there is a $0 \neq Z$ and $g \colon Z \to X_i$ and $h \colon X_{i+1} \to Z$ such that $gf_ih$ is an isomorphism.
      Then commutativity of
      \includepdf{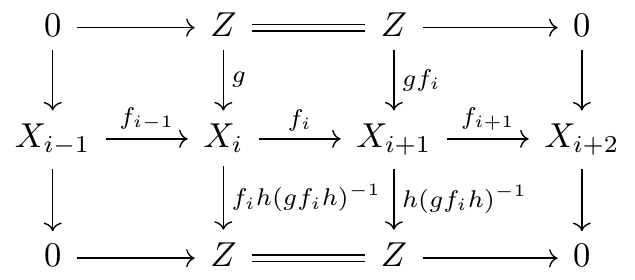}
      shows that the trivial $(n+2)$-angle $\triv_i(Z)$ is a direct summand of $X$.
      Choose a complement $X'$ for $\triv_i(Z)$ in $X$, that is $X = X' \oplus \triv_i(Z)$ as $(n+2)$-angles.
      Since $i \neq 0, 1, n+1$ we know that $X'$ has $f_0$ in position $0$.
      If $X'$ is not a minimal completion of $f_0$ we can repeat this procedure.
      As any object $X_0, \dots, X_{n+1}$ has the descending chain condition on direct summands, the $(n+2)$-angle $X$ has the descending chain condition on direct summands.
      This means, after finitely many steps this procedure must stop and hence $X$ is the direct sum of a minimal completion of $f_0$ and trivial $(n+2)$-angles.
      Finally, noticing $\triv_i(Z) \oplus \triv_i(Z') = \triv_i(Z\oplus Z')$ for $i \in \{2,\dots,n\}$ and $Z,Z' \in \sF$ and thus collecting all the trivial $(n+2)$-angles of the same shape in one summand yields the lemma.
   \end{proof}
   
   Notice, by \cref{lemma:minimal-n-plus-2-angles-exist} minimal completions exist and by \cref{lemma:uniqueness-of-minimal-n+2-angles} they are unique up to isomorphism. 
   We can therefore speak of \emph{the} minimal completion of a morphism $f_0$.
   The same holds for minimal $(n+2)$-angles. 

   For the second part of our plan we show the following easy lemma, which holds in any additive Krull-Schmidt category:

   \begin{lemma}[Matrix lemma] \label{lemma:matrix-lemma}
      Let $f \colon X_1 \oplus X_2 \to Y_1 \oplus Y_2$ be so that $\iota_1f\pi'_1 \in \rad(X_1, Y_1)$, where $\iota_1 \colon X_1 \to X_1 \oplus X_2$ is the canonical inclusion and $\pi'_1 \colon Y_1 \oplus Y_2 \to Y_1$ is the canonical projection. 
      Then all objects $Z$ which have $g \colon Z \to X_1 \oplus X_2$ and $h \colon Y_1 \oplus Y_2 \to Z$ such that $gfh \in \Aut Z$ satisfy $Z \in \add(X_2 \oplus Y_2)$.
   \end{lemma}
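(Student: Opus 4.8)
The plan is to read off the four ``matrix entries'' of $f$ with respect to the two biproduct decompositions and to track how each of them contributes to the automorphism $gfh$. Write $\iota_1,\iota_2$ for the inclusions of $X_1,X_2$ into $X_1\oplus X_2$ and $\pi_1,\pi_2$ for the projections, and let $\iota'_1,\iota'_2,\pi'_1,\pi'_2$ be the analogous morphisms for $Y_1\oplus Y_2$; thus $\pi_1\iota_1+\pi_2\iota_2=\id_{X_1\oplus X_2}$ and $\pi'_1\iota'_1+\pi'_2\iota'_2=\id_{Y_1\oplus Y_2}$. Inserting these two resolutions of the identity on either side of $f$ expands the endomorphism $gfh$ of $Z$ as the sum of the four terms $(g\pi_i)(\iota_i f\pi'_j)(\iota'_j h)$ with $i,j\in\{1,2\}$. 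First I would classify these terms: the $(1,1)$-term factors through $\iota_1 f\pi'_1\in\rad(X_1,Y_1)$ and hence, since the radical is a two-sided ideal, lies in $\rad(Z,Z)$; each of the remaining three terms factors through $X_2$ or through $Y_2$, so their sum factors through an object $W\in\add(X_2\oplus Y_2)$ (take $W$ to be the direct sum of the three intermediate objects).

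Next I would exploit the Krull-Schmidt hypothesis through the fact that $\rad(Z,Z)$ is the Jacobson radical of $\End Z$. Writing $gfh=r+u'$ with $r$ the radical $(1,1)$-term and $u'$ the sum of the other three terms, the element $gfh$ is a unit of $\End Z$ by assumption while $r$ lies in the Jacobson radical, so $u'=gfh-r$ is again a unit, i.e.\ an automorphism of $Z$. Since $u'$ factors as $Z\xrightarrow{a}W\xrightarrow{b}Z$ with $W\in\add(X_2\oplus Y_2)$, the relation $a\,\bigl(b\,(u')^{-1}\bigr)=\id_Z$ exhibits $a$ as a split monomorphism; thus $Z$ is a direct summand of $W$, and as $\add(X_2\oplus Y_2)$ is closed under summands we conclude $Z\in\add(X_2\oplus Y_2)$.

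The main obstacle is the step ``automorphism minus radical endomorphism is again an automorphism''. It is tempting to reduce first to $Z$ indecomposable, but this fails: the diagonal blocks of $gfh$ with respect to a Krull-Schmidt decomposition of $Z$ need not themselves be automorphisms, so the hypothesis $gfh\in\Aut Z$ does not pass to indecomposable summands. Instead I would argue at the level of the ring $\End Z$, invoking that in a Krull-Schmidt category $\rad(Z,Z)=\rad\End Z$ is the Jacobson radical, whence $\id_Z-s$ is invertible for every $s\in\rad(Z,Z)$ and consequently a unit minus a radical element is a unit. Once this ring-theoretic fact is in hand, the remaining verifications (the bookkeeping of the four terms and the split-monomorphism argument) are routine.
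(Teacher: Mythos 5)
Your proof is correct, but it takes a genuinely different route from the paper's, and your stated reason for avoiding the indecomposable reduction is a strawman: the paper does reduce to indecomposable $Z$, not by restricting $gfh$ to diagonal blocks, but by twisting with its inverse. For an indecomposable summand $\iota \colon Z' \to Z$, $\pi \colon Z \to Z'$ with $\iota\pi = \id_{Z'}$, setting $g' = \iota g$ and $h' = h(gfh)^{-1}\pi$ gives $g'fh' = \id_{Z'}$, so the hypothesis does pass to summands. With $Z$ indecomposable the paper then exploits that $\End Z$ is local: in the four-term expansion of $gfh$ the $(1,1)$ term is radical, hence one of the other three terms must be a unit, which exhibits $Z$ as a direct summand of $X_2$ or of $Y_2$; the general case follows because $Z$ decomposes into finitely many indecomposables. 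Your argument instead keeps $Z$ arbitrary and works in the ring $\End Z$: the $(1,1)$ term lies in $\rad(Z,Z) = \rad(\End Z)$, so subtracting it from the unit $gfh$ leaves a unit $u'$, and since $u'$ factors through an object $W \in \add(X_2 \oplus Y_2)$, composing the factorization with $(u')^{-1}$ splits $Z$ off $W$. What each approach buys: yours never invokes the Krull--Schmidt property --- only that the radical is a two-sided ideal with $\rad(Z,Z)$ the Jacobson radical of $\End Z$, and that $\add(X_2\oplus Y_2)$ is closed under summands --- so it proves the lemma in any additive category; the paper's argument needs Krull--Schmidt but yields the sharper conclusion that an indecomposable $Z$ is a summand of $X_2$ alone or of $Y_2$ alone.
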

   \begin{proof}
      First assume that $Z$ is indecomposable, with $g$ and $h$ as in the lemma.
      For $i=1,2$ let the morphisms $\iota_i \colon X_i \to X_1 \oplus X_2$ and $\iota'_i \colon Y_i \to Y_1 \oplus Y_2$ be the canonical inclusions and $\pi_i \colon X_1 \oplus X_2 \to X_i$ and $\pi_i' \colon Y_1 \oplus Y_2 \to Y_i$ be the canonical projections with respect to the given direct sum decomposition.
      We know that $\rad(Z,Z) = \rad(\End Z)$ is local, since $Z$ is indecomposable.
      Now we have 
      \[ \sum_{(i,j) \in \{1,2\}^2}g\pi_{i}\iota_{i}f\pi'_{j}\iota'_{j}h = gfh \in \Aut(Z) \text{,}\]
      so at least one of the four summands on the left is an isomorphism.
      However, since the radical is an ideal, we have $g\pi_{1}\iota_{1} f \pi'_{1} \iota'_{1} h \in \rad( \End Z)$, so one of the other three summands needs to be an isomorphism.
      But then it must be a summand with $(i,j) \neq (1,1)$, so $g\pi_{2}$ is split mono or $\iota_{2}' h$ is split epi.
      This shows the lemma for $Z$ indecomposable.

      Now let $Z$ be arbitrary, with $g$ and $h$ as in the lemma. 
      Let $Z'$ be an indecomposable direct summand of $Z$, that is there are $\iota \colon Z' \to Z$ and $\pi \colon Z \to Z'$ with $\iota \pi = \id_{Z'}$.
      Then $\id_{Z'} = g' f h'$ with $g' = \iota g$ and $h' = h(gfh)^{-1}\pi$.
      Hence $Z' \in \add(X_2 \oplus Y_2)$ by the first part.
      But $Z$ decomposes, up to isomorphism and reordering, uniquely into finitely many indecomposable objects.
      As all indecomposable direct summands of $Z$ belong to $\add(X_2 \oplus Y_2)$ we finally obtain $Z \in \add(X_2 \oplus Y_2)$.
   \end{proof}
  
   We are now able to prove the main result of this section.

   \begin{proof}[Proof of \cref{lemma:comparison-lemma}] \label{proof:comparison-lemma}
   We use the same notation as in \cref{lemma:comparison-lemma}.
   We apply the axiom (N4*) to \cref{figure:N4-asterik-axiom-1}.
   We obtain the $(n+2)$-angle in \cref{figure:N4-asterik-axiom-totalisation} and \cref{figure:totalization-labled}
   \begin{figure}[ht]
      \includepdf{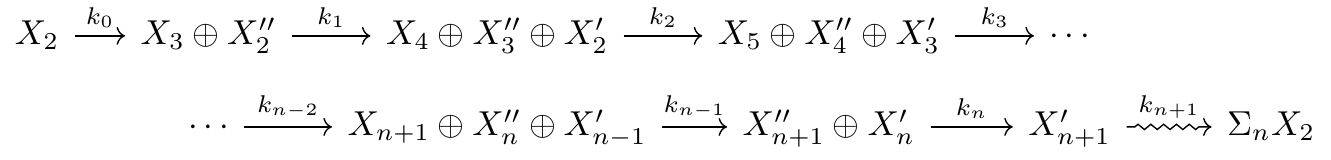}
      \caption{Labeled version of the totalisation in \cref{figure:N4-asterik-axiom-totalisation}.}
      \label{figure:totalization-labled}
   \end{figure}
   where the morphisms $k_0, \dots, k_{n+1}$ can be written in the form
   \begin{align*}
      k_0 &= \left[\begin{smallmatrix} f_2 \\ \phi_2  \end{smallmatrix}\right] \\
      k_1 &= \left[\begin{smallmatrix} -f_3 & 0 \\ \phi_3 & -h_2 \\ \psi_3 & \phi'_2 \end{smallmatrix}\right] \\
      k_i &= \left[\begin{smallmatrix} -f_{i+2} & 0 & 0 \\ \phi_{i+2} & -h_{i+1} & 0 \\ \psi_{i+2} & \phi'_{i+1} & g_i  \end{smallmatrix}\right] \tag{$2 \leq i \leq n-2$}\\
      k_{n-1} &= \left[\begin{smallmatrix} \phi_{n+1} & -h_{n} & 0 \\ \psi_{n+1} & \phi'_{n} & g_{n-1} \end{smallmatrix}\right] \\
      k_n &= \left[\begin{smallmatrix} \phi'_{n+1} & g_{n} \end{smallmatrix}\right] \\
      k_{n+1} &= g_{n+1} \Sigma_n f_1
   \end{align*}
   for some morphisms $\phi_k \colon X_k \to X''_k$ and $\phi'_k \colon X''_k \to X'_k$ for $2 \leq k \leq n+1$ as well as morphisms $\psi_{k} \colon X_k \to X'_{k-1}$ for $3 \leq k \leq n+1$.
   By \cref{lemma:minimal-n-plus-2-angles-exist} we know that the $(n+2)$-angle shown in \cref{figure:totalization-labled} is the direct sum of the minimal $(n+2)$-angle of $k_{n+1}$ and trivial $(n+2)$-angles $\triv_{1}(Z_1), \dots, \triv_{n-1}(Z_{n-1})$.
   This choice of $Z_1, \dots, Z_{n-1}$ establishes (\ref{enum:begin-sum-property})-(\ref{enum:end-sum-property}) of \cref{lemma:comparison-lemma}.

   It remains to show that (\ref{enum:begin-Z-add-property})-(\ref{enum:end-Z-add-property}) of \cref{lemma:comparison-lemma} hold for this choice of $Z_1,\dots,Z_{n-1}$.
   We only show (\ref{enum:begin-Z-add-property}), the other assertions are analogous.
   Since the trivial $(n+2)$-angle $\triv_1(Z_1)$ is a direct summand of the $(n+2)$-angle in \cref{figure:totalization-labled} we know that $\id_{Z_1}$ factors through $k_1$.
   Since $-h_2$ is in the radical by assumption we must have $Z_1 \in \add(X_3 \oplus (X_4 \oplus X'_2))$ by the matrix \cref{lemma:matrix-lemma}.
   \end{proof}
   
   \section{Main results} \label{section:main_results} 
   Throughout \cref{section:main_results} we assume that $(\sF,\Sigma_n,\pentagon)$ is an $(n+2)$-angulated Krull-Schmidt category.
   Further, we will assume that $\sA$ is an additive, $n$-extension closed subcategory of $\sF$.
   We define an $n$-exact structure on $\sA$:

   \begin{definition}
      For a subcategory $\sA \subset \sF$ an \emph{$\sA$-conflation} is a complex
      \includepdf{tikz/short-n-plus-2-angle.pdf}
      with $A_0, \dots, A_{n+1} \in \sA$ for which there is a morphism $f_{n+1} \colon A_{n+1} \to \Sigma_n A_0$ such that
      \includepdf{tikz/n-plus-2-angle.pdf}
      is an $(n+2)$-angle in $(\sF, \Sigma_n, \pentagon)$. 
      The morphisms in position $0$ of $\sA$-conflations are called \emph{$\sA$-inflations}.
      Dually, the morphisms which appear in position $n$ of $\sA$-conflations are called \emph{$\sA$-deflations}.
      We denote by $\sE_{\sA}$ the class of all $\sA$-conflations.
   \end{definition}

   Notice, by the replacement \cref{lemma:replacement_lemma}, it follows immediately that any complex in $\sA$ isomorphic to an $\sA$-conflation is an $\sA$-conflation itself.

   We now want to present our main theorem:

   \begin{theorem} \label{theorem:maintheorem}
      Let $\sA \subset \sF$ be an additive, $n$-extension closed subcategory of $\sF$ with $\Hom_{\sF}(\Sigma_n \sA, \sA) = 0$.
      Then 
      \begin{enumerate}
         \item $(\sA, \sE_{\sA})$ is an $n$-exact category and \label{enum:maintheorem-1}
         \item there is a natural bilinear isomorphism $\Hom_{\sF}(-, \Sigma_n(-)) \to \YExt^n_{(\sA, \sE_{\sA})}(-, -)$ of functors $\sA \times \sA^{\op} \to \Ab$. \label{enum:maintheorem-2}
      \end{enumerate}
   \end{theorem}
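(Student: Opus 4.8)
The plan is to establish part~(\ref{enum:maintheorem-1}) by verifying the axioms (E0)--(E2$^{\op}$) of \cite[definition 4.2]{jasso_n-abelian_2016} for $(\sA,\sE_{\sA})$ one at a time, and then to deduce part~(\ref{enum:maintheorem-2}) by writing down the comparison map and its inverse explicitly. First I would check that every $\sA$-conflation really is an $n$-exact sequence. The weak-cokernel and weak-kernel conditions are inherited directly from the exactness of $(n+2)$-angles recalled in \cref{sec:preliminaries}, using that $\sA$ is full so that the factorizations produced there are morphisms of $\sA$. The only point requiring $\Hom_{\sF}(\Sigma_n\sA,\sA)=0$ is that $f_0$ is a monomorphism and $f_n$ an epimorphism in $\sA$: if $a f_0 = 0$ for some $a\colon A \to A_0$ with $A \in \sA$, exactness factors $a$ through the rotation $\Sigma_n^{-1}f_{n+1}$ as $a = b\,(\Sigma_n^{-1}f_{n+1})$, and applying the automorphism $\Sigma_n$ gives $\Sigma_n b \in \Hom_{\sF}(\Sigma_n A, A_{n+1}) \subseteq \Hom_{\sF}(\Sigma_n\sA,\sA)=0$, whence $a=0$; the claim for $f_n$ is dual. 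Closure of $\sE_{\sA}$ under isomorphism is the replacement \cref{lemma:replacement_lemma}, and (E0) holds because a trivial $(n+2)$-angle $\triv_i(0)$ exhibits the zero sequence as a conflation.

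The crux of part~(\ref{enum:maintheorem-1}) is axiom (E1): the composite $h_0 = f_0 g_0$ of two $\sA$-inflations is again an $\sA$-inflation. Here I would fix $\sA$-conflations completing $f_0$ and $g_0$ to $(n+2)$-angles with all terms in $\sA$, take the second row of \cref{figure:N4-asterik-axiom-1} to be the minimal $(n+2)$-angle of $h_0$ (which exists by \cref{lemma:minimal-n-plus-2-angles-exist}), and apply (N4*). The resulting connecting morphism $g_{n+1}\Sigma_n f_1$ is a morphism between objects of $\sA$, so $n$-extension closedness furnishes a minimal $(n+2)$-angle for it with terms $Y_1,\dots,Y_n \in \sA$. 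The comparison \cref{lemma:comparison-lemma} then presents each term $X_k''$ of the minimal angle of $h_0$ as a summand of a direct sum of the $Y_\bullet$ and the auxiliary objects $Z_\bullet$, while items (\ref{enum:begin-Z-add-property})--(\ref{enum:end-Z-add-property}) place every $Z_\bullet$ in $\add$ of objects of $\sA$; since $\sA$ is closed under direct sums and summands, every $X_k'' \in \sA$ and $h_0$ is an $\sA$-inflation. I expect this to be the main obstacle, and it is exactly the reason the comparison lemma was developed. The cases outside the range $n \geq 3$ of \cref{lemma:comparison-lemma} are handled separately: for $n=1$ the statement is \cite{dyer_exact_2005}, and for $n=2$ one uses the variant of \cref{lemma:comparison-lemma} recorded after its statement. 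Axiom (E1$^{\op}$) follows by passing to the opposite category, which is again $(n+2)$-angulated and in which $\Hom_{\sF}(\Sigma_n\sA,\sA)=0$ is preserved.

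For the pushout axiom (E2), given an $\sA$-conflation with connecting morphism $\delta\colon A_{n+1}\to \Sigma_n A_0$ and a morphism $\alpha\colon A_0 \to B_0$ in $\sA$, I would complete the pushed-forward morphism $\delta\,(\Sigma_n\alpha)\colon A_{n+1}\to \Sigma_n B_0$ to an $(n+2)$-angle $B_0 \to B_1 \to \cdots \to B_n \to A_{n+1}$ with terms in $\sA$ by $n$-extension closedness; this is the candidate pushout conflation, and it keeps the second variable $A_{n+1}$ fixed. The completion \cref{lemma:completion-lemma} in its case $i=0$ then produces a morphism of $(n+2)$-angles extending $\alpha$, its side condition being precisely the vanishing of two consecutive morphisms in the pushout angle, and the defining $n$-pushout property is verified from exactness together with the weak-kernel and weak-cokernel structure already established. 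Axiom (E2$^{\op}$) is dual.

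Finally, for part~(\ref{enum:maintheorem-2}), now that $(\sA,\sE_{\sA})$ is an $n$-exact category and $\YExt^n$ carries its Baer sum, I would define the comparison map $\Hom_{\sF}(A_{n+1},\Sigma_n A_0)\to \YExt^n_{(\sA,\sE_{\sA})}(A_{n+1},A_0)$ by sending $\delta$ to the class of any $\sA$-conflation completing it. Independence of the chosen completion follows from \cref{lemma:completion-lemma} applied with identities in positions $0$ and $n+1$, which yields an equivalence of $n$-exact sequences in the sense of \cite[definition 2.9]{jasso_n-abelian_2016}. The inverse sends a conflation to its connecting morphism; this is well defined on $\YExt$-classes because \cref{lemma:connecting-morphism-is-unique}, applied with $\Hom_{\sF}(\Sigma_n A_0, A_{n+1}) \subseteq \Hom_{\sF}(\Sigma_n\sA,\sA)=0$, shows that the connecting morphisms of the two angles completing a generating equivalence coincide; that the two assignments are mutually inverse is then immediate. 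Bilinearity and naturality amount to matching the Baer sum with addition of connecting morphisms and the pushforward and pullback of conflations (from (E2) and (E2$^{\op}$)) with post- and pre-composition by $\Sigma_n(-)$ in $\Hom_{\sF}(-,\Sigma_n(-))$; these verifications are routine once (E2) and (E2$^{\op}$) are in hand, the only delicate ingredient being the invariance of the connecting morphism under the generating equivalences, which is again \cref{lemma:connecting-morphism-is-unique}.
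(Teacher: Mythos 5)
Your outline reproduces the paper's strategy for most of the axioms, but it omits one axiom entirely, and it is the hardest one. In \cite[definition 4.2]{jasso_n-abelian_2016} an $n$-exact structure is required to be closed under \emph{weak} isomorphisms of $n$-exact sequences, i.e.\ under morphisms $(\phi_0,\dots,\phi_{n+1})$ in which only two components $\phi_i,\phi_{i+1}$ (or only $\phi_0$ and $\phi_{n+1}$) are invertible; your proposal only records closure under honest isomorphisms via \cref{lemma:replacement_lemma}. For $n\geq 2$ these are very different conditions: a weak isomorphism is in general not invertible, and an $n$-exact sequence weakly isomorphic to an $\sA$-conflation has a priori unrelated middle terms, so nothing in your text shows it admits a completion to an $(n+2)$-angle. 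The paper devotes its longest argument to exactly this point (\cref{lemma:weak-isomorphism-closure}, supported by \cref{lemma:minimal-conflations-1} and \cref{lemma:minimal-conflations-2}): one strips off trivial summands using the Krull--Schmidt property, applies the radical comparison \cref{lemma:weak-cokernels} and its dual to force the remaining components to be isomorphisms, and, in the case where only $\phi_0$ and $\phi_{n+1}$ are isomorphisms, first proves $g_n f_{n+1}=0$ by a homotopy argument in the style of \cite[proposition 4.8]{jasso_n-abelian_2016} before a quasi-inverse $\phi_n'$ can be constructed. Without this verification, $(\sA,\sE_{\sA})$ has not been shown to satisfy the definition, so part (\ref{enum:maintheorem-1}) is not established.

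There is also a gap in your treatment of (E2). You complete $\delta(\Sigma_n\alpha)$ to an $(n+2)$-angle with terms in $\sA$ and use \cref{lemma:completion-lemma} (which rests only on axiom (F3)) to extend $\alpha$ to a morphism of $(n+2)$-angles, and then assert that the $n$-pushout property of \cite[definition 2.11]{jasso_n-abelian_2016} follows ``from exactness together with the weak-kernel and weak-cokernel structure already established.'' But the $n$-pushout condition is a statement about the mapping cone of that morphism, and the cone of an \emph{arbitrary} morphism of $(n+2)$-angles need not be an $(n+2)$-angle --- this is precisely the distinction between (F3) and the higher octahedral axiom (F4) of \cite[definition 2.1]{geiss_n-angulated_2013}. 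The paper's \cref{lemma:E2-axiom} therefore invokes (F4) to obtain a \emph{good} morphism $\phi$, so that the reduced cone $P$ is a direct summand of an $(n+2)$-angle, hence itself an $(n+2)$-angle with terms in $\sA$, hence an $\sA$-conflation and $n$-exact by \cref{lemma:A-conflations-are-n-exact}; that is what delivers the weak cokernel conditions. Your route can in fact be repaired without (F4), since the cone of any morphism of $(n+2)$-angles is $\Hom$-exact (apply $\Hom_{\sF}(-,W)$ to the degreewise split short exact sequence of complexes relating the cone to its two rows and use the long exact sequence), but this argument appears nowhere in your proposal and does not follow from the weak kernel/cokernel structure of the two rows alone. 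The remaining parts --- $n$-exactness of $\sA$-conflations, (E0), (E1) via \cref{lemma:comparison-lemma} with the minimal completion of $h_0$ as middle row, and part (\ref{enum:maintheorem-2}) via \cref{lemma:completion-lemma} and \cref{lemma:connecting-morphism-is-unique} --- do follow the paper's line and are fine.
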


   To check part (\ref{enum:maintheorem-1}) of \cref{theorem:maintheorem} we need to verify the axioms of \cite[definition 4.2]{jasso_n-abelian_2016}. 
   For the convenience of the reader this is split up into \cref{lemma:A-conflations-are-n-exact}, \cref{lemma:weak-isomorphism-closure}, \cref{lemma:E0-axiom}, \cref{lemma:E1-axiom} and \cref{lemma:E2-axiom}.
   We will check part (\ref{enum:maintheorem-2}) separately in \cref{lemma:Yext-isomorphism}.

   First we need to verify that $\sE_{\sA}$ really consist of $n$-exact sequences:
   
   \begin{lemma} \label{lemma:A-conflations-are-n-exact}
      All $\sA$-conflations are $n$-exact sequences in $\sA$.
   \end{lemma}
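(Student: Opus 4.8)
The plan is to unwind the definition of an $n$-exact sequence recalled above and verify each clause directly from the exactness of the ambient $(n+2)$-angle, invoking the hypothesis $\Hom_{\sF}(\Sigma_n \sA, \sA) = 0$ only at the two endpoints. So fix an $\sA$-conflation $A_0 \xrightarrow{f_0} A_1 \to \cdots \xrightarrow{f_n} A_{n+1}$ and choose $f_{n+1} \colon A_{n+1} \to \Sigma_n A_0$ completing it to an $(n+2)$-angle $A_0 \xrightarrow{f_0} \cdots \xrightarrow{f_n} A_{n+1} \xrightarrow{f_{n+1}} \Sigma_n A_0$, which is exact by \cite[proposition 2.5(a)]{geiss_n-angulated_2013}.

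For the interior conditions I would argue that exactness already supplies the required weak (co)kernels in $\sF$: for each relevant index the morphism $f_i$ is a weak cokernel of $f_{i-1}$ and $f_{i-1}$ is a weak kernel of $f_i$, and the three objects $A_{i-1}, A_i, A_{i+1}$ involved all lie in $\sA$ (as the indices stay in $\{0,\dots,n+1\}$). Since $\sA$ is a full subcategory and the universal property of a weak (co)kernel in $\sF$ is tested against \emph{all} objects of $\sF$, in particular against those of $\sA$, each such weak (co)kernel remains one when the diagram is viewed inside $\sA$. This settles that $f_i$ is a weak cokernel of $f_{i-1}$ for $i = 1,\dots,n$ and a weak kernel of $f_{i+1}$ for $i = 0,\dots,n-1$.

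It then remains to promote the two outer weak (co)kernels to an honest kernel and cokernel, which by the equivalences noted above amounts to showing that $f_0$ is a monomorphism and $f_n$ an epimorphism in $\sA$. For $f_0$, suppose $g \colon Z \to A_0$ with $Z \in \sA$ satisfies $g f_0 = 0$. Exactness lets me factor $g$ through $f_{-1} = \Sigma_n^{-1} f_{n+1}$, say $g = g'(\Sigma_n^{-1} f_{n+1})$ with $g' \colon Z \to \Sigma_n^{-1} A_{n+1}$; crucially the intermediate object need \emph{not} lie in $\sA$. Transporting $g'$ along the automorphism $\Sigma_n$ lands it in $\Hom_{\sF}(\Sigma_n Z, A_{n+1}) \subseteq \Hom_{\sF}(\Sigma_n \sA, \sA) = 0$, so $g' = 0$ and hence $g = 0$. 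Dually, applying $\Hom_{\sF}(-,Z)$ to the $(n+2)$-angle shows that any $g \colon A_{n+1} \to Z$ with $Z \in \sA$ and $f_n g = 0$ factors as $g = f_{n+1} g'$ with $g' \in \Hom_{\sF}(\Sigma_n A_0, Z) \subseteq \Hom_{\sF}(\Sigma_n \sA, \sA) = 0$, forcing $g = 0$. Together with the weak kernel and weak cokernel properties already established, this makes $f_0$ a kernel of $f_1$ and $f_n$ a cokernel of $f_{n-1}$, completing the verification.

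This proof is essentially routine once exactness is brought to bear, so I do not expect a genuine obstacle; the only two points demanding care are the passage from weak (co)kernels in $\sF$ to weak (co)kernels in $\sA$ (which is exactly where fullness of $\sA$ is used) and the observation that at the two ends the factoring object is permitted to leave $\sA$, so that the vanishing $\Hom_{\sF}(\Sigma_n \sA, \sA) = 0$ may be applied only after transporting along $\Sigma_n$.
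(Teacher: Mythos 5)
Your proof is correct and takes essentially the same route as the paper: the paper applies $\Hom_{\sF}(A,-)$ and $\Hom_{\sF}(-,A)$ for $A \in \sA$ to the completing $(n+2)$-angle and notes that the outer Hom-terms vanish by $\Hom_{\sF}(\Sigma_n \sA, \sA) = 0$, which is precisely your argument in unwound, element-by-element form. Your explicit treatment of fullness (for the interior weak (co)kernels) and of transporting along $\Sigma_n$ at the endpoints makes implicit steps of the paper's proof visible, but there is no substantive difference.
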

   \begin{proof}
      Suppose we are given an arbitrary $\sA$-conflation 
      \includepdf{tikz/short-n-plus-2-angle.pdf}
      in $\sE_{\sA}$. 
      By definition there is a morphism $f_{n+1} \colon A_{n+1} \to \Sigma_{n} A_0$ such that 
      \includepdf{tikz/n-plus-2-angle.pdf}
      is an $(n+2)$-angle in $(\sF,\Sigma_n,\pentagon)$.
      Applying the functor $\Hom_{\sF}(A,-)$ for $A \in \sA$ to this $(n+2)$-angle and using that $(n+2)$-angles are exact, we obtain an exact sequence
      \includepdf{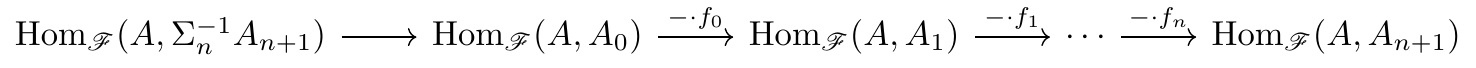}
      where the leftmost term vanishes since $0 = \Hom_{\sF}(\Sigma_n \sA, \sA) \cong \Hom_{\sF}(\sA, \Sigma_n^{-1} \sA)$.
      This statement and its dual statement obtained from applying $\Hom_{\sF}(-, A)$ for $A \in \sA$ to the same $(n+2)$-angle show that any sequence of $\sE_{\sA}$ is indeed $n$-exact.
   \end{proof}
  
   Next, we need to check that an $n$-exact sequence in $\sA$ weakly isomorphic to an $n$-exact sequence in $\sE_{\sA}$ is itself in $\sE_{\sA}$.
   We could prove this directly.
   However, for the sake of readability, we restate a version of \cref{lemma:minimal-n-plus-2-angles-exist} for $n$-exact sequences in $\sA$: 
   
   \begin{lemma} \label{lemma:minimal-conflations-1}
   Suppose we are given an $n$-exact sequence $E$ in $\sA$ of the shape
   \includepdf{tikz/short-n-plus-2-angle.pdf}
   and a fixed $i \in \{0,\dots,n\}$. 
   Then $E$ is the direct sum of an $n$-exact sequence $E'$ of the shape
   \includepdfcomma{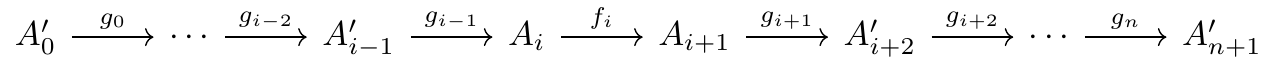}
   where $g_0, \dots, g_{i-2},g_{i+2}, g_n$ are in the radical, and $\sA$-conflations $F_0, \dots,F_{i-2}, F_{i+2}, \dots, F_n$ arising from trivial $(n+2)$-angles $\triv(B_0), \dots, \triv(B_{i-2}), \triv(B_{i+2}), \dots, \triv(B_n)$ with $B_0, \dots, B_{i-2}, B_{i+2}, \dots, B_n \in \sA$.
   Further, if $E'$ is an $\sA$-conflation then so is $E$.
   \end{lemma}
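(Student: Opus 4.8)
The plan is to adapt the proof of \cref{lemma:minimal-n-plus-2-angles-exist} to the purely $n$-exact setting, peeling off trivial direct summands at the prescribed positions using only weak kernels and weak cokernels; since $E$ is not assumed to be an $\sA$-conflation, the ambient $(n+2)$-angle structure is unavailable and the splitting must be carried out intrinsically. The one structural fact I would establish first is that a direct summand of an $n$-exact sequence is again $n$-exact: for a block-diagonal decomposition the relations $g_{k-1}g_k = 0$, the universal properties of the weak (co)kernels, and the monic/epic conditions at the two ends each restrict to a summand, as one sees by pushing a test morphism of the form $(h,0)$ out of (respectively into) the summand through the corresponding property on all of $E$. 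Because the trivial sequences $\triv_j(B)$ are evidently $n$-exact, every complement produced below is $n$-exact automatically.

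The heart of the argument is a single splitting step. Fix a target position $j \in \{0,\dots,i-2\} \cup \{i+2,\dots,n\}$ with $g_j \notin \rad(A_j,A_{j+1})$. Then there are $B \neq 0$ and morphisms $a \colon B \to A_j$, $b \colon A_{j+1} \to B$ with $a g_j b \in \Aut(B)$; replacing $b$ by $b(a g_j b)^{-1}$ I may assume $a g_j b = \id_B$, so $a$ is split monic and $B$ is a summand of $A_j$, whence $B \in \sA$. I would then define $\phi \colon \triv_j(B) \to E$ by $\phi_j = a$, $\phi_{j+1} = a g_j$ and $\phi_k = 0$ otherwise, and $\psi \colon E \to \triv_j(B)$ by $\psi_j = g_j b$, $\psi_{j+1} = b$ and $\psi_k = 0$ otherwise. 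All squares commute using only $g_{j-1}g_j = 0$ and $g_jg_{j+1} = 0$ (whenever these maps exist), which hold since consecutive maps of an $n$-exact sequence compose to zero, and $\phi\psi = \id_{\triv_j(B)}$ because $a g_j b = \id_B$. Thus $\triv_j(B)$ splits off $E$, and by the first paragraph any complement $E_{\mathrm{new}}$ is again $n$-exact.

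To conclude I would iterate this step over the target positions. Each split removes a nonzero summand from one of $A_0,\dots,A_{n+1}$, so the descending chain condition on direct summands in the Krull--Schmidt category $\sF$ forces termination after finitely many steps. A split at position $j$ only shrinks $A_j$ and $A_{j+1}$, and restricting an already-radical map to a summand keeps it radical (the radical is an ideal), so a target map stays radical once it has been made so; in particular $A_i$ and $A_{i+1}$ are never disturbed. When the process halts, all of $g_0,\dots,g_{i-2},g_{i+2},\dots,g_n$ are radical in the surviving sequence $E'$, which is the desired $E'$, and collecting pieces of equal position via $\triv_j(B) \oplus \triv_j(B') = \triv_j(B \oplus B')$ packages them as the $\sA$-conflations $F_j$ coming from $\triv_j(B_j)$, giving $E = E' \oplus \bigoplus_j F_j$. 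For the final assertion I would invoke closure of $\pentagon$ under finite direct sums: if $E'$ is an $\sA$-conflation, then a completing $(n+2)$-angle for $E$ is the direct sum of completing $(n+2)$-angles for $E'$ and for each trivial $F_j$, so $E$ is an $\sA$-conflation. I expect the summand-inheritance of $n$-exactness to be the main technical point, and the bookkeeping around the untouched window $\{i-1,i,i+1\}$ to be where one must remain careful.
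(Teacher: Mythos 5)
Your proposal is correct and follows essentially the same route as the paper: the paper's proof simply says the splitting is "constructed in the same way as in \cref{lemma:minimal-n-plus-2-angles-exist}" (peeling off trivial summands $\triv_j(B)$ at non-radical positions $j \neq i-1, i, i+1$, using additivity of $\sA$, the fact that direct summands of $n$-exact sequences are $n$-exact, and closure of $\pentagon$ under direct sums for the final assertion), and your write-up is exactly this argument carried out intrinsically with the explicit section/retraction maps and the DCC termination spelled out.
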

   \begin{proof}
      The $\sA$-conflations $F_0, \dots,F_{i-2}, F_{i+2}, \dots, F_n$ can be constructed in the same way as in \cref{lemma:minimal-n-plus-2-angles-exist}, using that $\sA \subset \sF$ is additive.
      Clearly a direct summand of an $n$-exact sequence in $\sA$ is an $n$-exact sequence in $\sA$, hence $E'$ constructed similarly to \cref{lemma:minimal-n-plus-2-angles-exist} is indeed an $n$-exact sequence in $\sA$.
      Since the direct sum of $\sA$-conflations is again an $\sA$-conflation, as the direct sum of $(n+2)$-angles is again an $(n+2)$-angle, we have that $E$ is an $\sA$-conflation if $E'$ is an $\sA$-conflation.
   \end{proof}
   
   Obviously, \cref{lemma:minimal-conflations-1} also has the following version, where the end terms of the $n$-exact sequences are fixed. 
   The proof is omitted as it is almost the same as of \cref{lemma:minimal-conflations-1}.
   Notice though, that here the $n$-Yoneda-extension class rather than a morphism is fixed.
   
   \begin{lemma} \label{lemma:minimal-conflations-2}
   Suppose we are given an $n$-exact sequence $E$ in $\sA$ of the shape
   \includepdfdot{tikz/short-n-plus-2-angle.pdf}
   Then $E$ is the direct sum of an $n$-exact sequence $E'$ of the shape
   \includepdfcomma{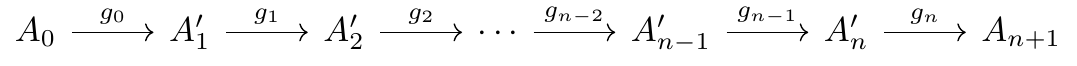}
   where $g_1, \dots, g_{n-1}$ are in the radical, and $\sA$-conflations $F_1, \dots, F_{n-1}$ arising from trivial $(n+2)$-angles $\triv(B_1), \dots, \triv(B_{n-1})$ with $B_1, \dots, B_{n-1} \in \sA$.
   Furthermore, if $E'$ is an $\sA$-conflation then so is $E$.
   \end{lemma}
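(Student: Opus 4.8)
The plan is to run the stripping-off argument of \cref{lemma:minimal-n-plus-2-angles-exist} inside the additive subcategory $\sA$, now keeping both end terms $A_0$ and $A_{n+1}$ fixed. The whole point, exactly as in \cref{lemma:minimal-conflations-1}, is that a trivial summand may be removed at any \emph{interior} position $i \in \{1,\dots,n-1\}$ without disturbing the objects $A_0$ and $A_{n+1}$: such a summand only involves $A_i$ and $A_{i+1}$, and for $i$ in this range both indices lie in $\{1,\dots,n\}$, so positions $0$ and $n+1$ are never touched.

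First I would carry out the following reduction step. Writing $E$ as $A_0 \xrightarrow{f_0} A_1 \xrightarrow{f_1} \cdots \xrightarrow{f_n} A_{n+1}$, suppose that $f_i$ is not in the radical for some $i \in \{1,\dots,n-1\}$. By definition of the radical there is a nonzero $Z$ together with $a \colon Z \to A_i$ and $b \colon A_{i+1} \to Z$ such that $a f_i b \in \Aut Z$, and after rescaling $b$ I may assume $a f_i b = \id_Z$. As $\sA$ is closed under direct summands and $Z$ is a retract of $A_i \in \sA$, we get $Z \in \sA$. Exactly as in \cref{lemma:minimal-n-plus-2-angles-exist}, the morphisms $a$ and $b$, together with the section $a f_i$ and the retraction $f_i b$ (and using $f_{i-1}f_i = 0 = f_i f_{i+1}$ for the outer squares), exhibit the $\sA$-conflation $F$ arising from the trivial $(n+2)$-angle $\triv_i(Z)$ as a direct summand of $E$. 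Thus $E \cong E'' \oplus F$ with $E''$ again an $n$-exact sequence in $\sA$, since a direct summand of an $n$-exact sequence is $n$-exact. Because $1 \le i \le n-1$, the decomposition alters only positions $i$ and $i+1 \le n$, so $E''$ retains the end terms $A_0$ and $A_{n+1}$.

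Next I would iterate this step. Since every object $A_0,\dots,A_{n+1}$ lies in the Krull-Schmidt category $\sF$, each satisfies the descending chain condition on direct summands, so only finitely many nonzero trivial pieces can be split off and the process terminates. The surviving $n$-exact sequence $E'$ then has all of $g_1,\dots,g_{n-1}$ in the radical and still has end terms $A_0, A_{n+1}$. Collecting, for each position $k \in \{1,\dots,n-1\}$, the trivial summands removed there into a single object $B_k \in \sA$ (using $\triv_k(Z)\oplus\triv_k(Z') = \triv_k(Z\oplus Z')$ and closure of $\sA$ under direct sums) yields $\sA$-conflations $F_1,\dots,F_{n-1}$ arising from $\triv_k(B_k)$, together with a decomposition $E \cong E' \oplus F_1 \oplus \cdots \oplus F_{n-1}$. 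For the final assertion, if $E'$ is an $\sA$-conflation then, as each $F_k$ is an $\sA$-conflation and a direct sum of $(n+2)$-angles is again an $(n+2)$-angle, the direct sum $E$ is an $\sA$-conflation as well.

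The argument is structurally the same as in \cref{lemma:minimal-n-plus-2-angles-exist} and \cref{lemma:minimal-conflations-1}, so I expect no genuine difficulty; the only point needing care is the bookkeeping of positions. Whereas \cref{lemma:minimal-conflations-1} pins down a single chosen morphism and allows the end terms to change, here I must restrict the splittable positions to $\{1,\dots,n-1\}$, which is precisely what holds \emph{both} end terms $A_0$ and $A_{n+1}$ — and hence the $n$-Yoneda-extension class of $E$ — fixed throughout the reduction.
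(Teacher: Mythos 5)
Your proof is correct and is essentially the argument the paper intends: the paper omits this proof, stating it is ``almost the same as'' that of \cref{lemma:minimal-conflations-1}, which in turn runs the splitting-off procedure of \cref{lemma:minimal-n-plus-2-angles-exist} inside the additive subcategory $\sA$, exactly as you do. Your only addition is making explicit the positional bookkeeping (restricting splittable positions to $\{1,\dots,n-1\}$ so that both end terms stay fixed), which is precisely the point the paper flags by remarking that here the $n$-Yoneda-extension class rather than a morphism is fixed.
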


   \begin{lemma} \label{lemma:weak-isomorphism-closure}
      The class of $\sA$-conflations $\sE_{\sA}$ is closed under weak isomorphisms in the class of $n$-exact sequences in $\sA$.
   \end{lemma}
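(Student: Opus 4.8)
The plan is to reduce an arbitrary weak isomorphism to the two shapes allowed by its definition and, in each, to strip off trivial summands until the weak isomorphism becomes an honest isomorphism of $n$-exact sequences; membership in $\sE_{\sA}$ is then preserved because $\sE_{\sA}$ is closed under isomorphism, by \cref{lemma:replacement_lemma} and the remark following the definition of an $\sA$-conflation. So let $E \in \sE_{\sA}$ and let $\phi$ be a weak isomorphism between $E$ and an $n$-exact sequence $F$ in $\sA$. By definition either (A) $\phi_0$ and $\phi_{n+1}$ are isomorphisms, or (B) $\phi_i$ and $\phi_{i+1}$ are isomorphisms for some $0 \le i \le n$.

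In case (A) I would apply \cref{lemma:minimal-conflations-2} to both $E$ and $F$, writing $E = \bar E \oplus T^E$ and $F = \bar F \oplus T^F$ with $\bar E, \bar F$ having all interior morphisms (positions $1, \dots, n-1$) in the radical and $T^E, T^F$ direct sums of trivial $\sA$-conflations supported on the interior positions; in case (B) I would instead use \cref{lemma:minimal-conflations-1} with the given $i$, so that the split-off trivial pieces avoid positions $i$ and $i+1$. In either case the two positions carrying the guaranteed isomorphisms are left untouched, so the morphism $\bar\phi \colon \bar E \to \bar F$ obtained by composing the inclusion of $\bar E$, the map $\phi$, and the projection onto $\bar F$ is again a weak isomorphism with the same two components invertible. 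I would then note that $\bar E$ is still an $\sA$-conflation: completing $E$ to an $(n+2)$-angle and decomposing that angle by \cref{lemma:minimal-n-plus-2-angles-exist} writes $E$, up to Krull--Schmidt cancellation of trivial conflations (each of which has vanishing connecting morphism), as the conflation underlying a minimal completion together with trivial conflations, and each of these summands is an $\sA$-conflation. By the last sentence of \cref{lemma:minimal-conflations-2} (resp.\ \cref{lemma:minimal-conflations-1}) it therefore suffices to prove $\bar F \in \sE_{\sA}$, and for this it is enough to show that $\bar\phi$ is an isomorphism.

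The heart of the argument, and the step I expect to be the main obstacle, is upgrading the weak isomorphism $\bar\phi$ to a genuine isomorphism of $n$-exact sequences. On the interior positions this is exactly \cref{lemma:weak-cokernels} together with its dual: both $\bar E$ and $\bar F$ are $n$-exact, so each morphism is simultaneously a weak cokernel of its predecessor and a weak kernel of its successor, and the interior morphisms lie in the radical, which forces the corresponding components of $\bar\phi$ to be isomorphisms. The two guaranteed isomorphisms act as anchors from which invertibility propagates outward through the radical-plus-weak-cokernel structure exactly as in the proof of \cref{lemma:weak-cokernels}, and the propagation is stopped at the ends by the remaining conditions of an $n$-exact sequence, namely that the morphism in position $0$ is a monomorphism (a kernel) and the one in position $n$ is an epimorphism (a cokernel); these pin down the components of $\bar\phi$ in positions $1$ and $n$. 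Once $\bar\phi$ is an isomorphism, $\bar F \cong \bar E \in \sE_{\sA}$ gives $\bar F \in \sE_{\sA}$, and re-adding the trivial $\sA$-conflations $T^F$ yields $F \in \sE_{\sA}$.

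The delicate bookkeeping lies in case (B): after reducing with \cref{lemma:minimal-conflations-1} the morphisms in positions $i-1, i, i+1$ need not be radical, so \cref{lemma:weak-cokernels} cannot be quoted verbatim and its proof technique must be rerun starting from the anchors in positions $i$ and $i+1$, propagating to the right via weak cokernels and to the left via weak kernels. I also expect to treat separately the direction of the given weak isomorphism (whether $\phi$ runs $E \to F$ or $F \to E$), although by the symmetry of the reduction and of \cref{lemma:weak-cokernels} this should not cause any real difficulty.
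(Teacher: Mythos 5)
Your case (B) is essentially the paper's own argument: split off trivial summands via \cref{lemma:minimal-n-plus-2-angles-exist} (for the known conflation) and \cref{lemma:minimal-conflations-1} (for the other sequence), then apply \cref{lemma:weak-cokernels} and its dual. The delicacy you flag there is handled in the paper by a padding trick rather than by rerunning the proof: since $f_n$ and $g_n$ are epimorphisms, $0 \colon A_{n+1} \to 0$ is a weak cokernel of $f_n$, so one appends zero objects on the right (dually on the left) and quotes \cref{lemma:weak-cokernels} verbatim; its hypotheses impose no radical condition on the first two morphisms of the ladder, so the non-radical morphisms near positions $i$, $i+1$ are harmless.

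The genuine gap is case (A). \cref{lemma:weak-cokernels} and its proof technique require the two invertible components to sit in \emph{consecutive} positions: the homotopy construction starts at the first square, where $f_1(\id - \phi_2) = (\id - \phi_1)f_1$ vanishes only because \emph{both} $\phi_0$ and $\phi_1$ are identities. With isomorphisms only at positions $0$ and $n+1$, neither the lemma nor its dual can even begin to propagate, and the end conditions of an $n$-exact sequence do not ``pin down'' $\phi_1$ and $\phi_n$: from $f_0 \phi_1 = g_0$ with $f_0$, $g_0$ monic and the interior radical, no invertibility of $\phi_1$ follows by formal diagram chasing. This is precisely where the paper works hardest. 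It first proves $g_n f_{n+1} = 0$, where $f_{n+1}$ is the connecting morphism of a completion of $E$ to an $(n+2)$-angle: one completes $g_n f_{n+1}$ itself to an $(n+2)$-angle with middle terms in $\sA$ (here the $n$-extension closedness of $\sA$ is used in an essential way), maps that angle to the angle of $E$ by axiom (F3), composes with $\phi$, and applies the dual of \cite[lemma 2.1]{jasso_n-abelian_2016} together with \cite[proposition 2.6]{jasso_n-abelian_2016} (legitimate because $\sA$-conflations are $n$-exact by \cref{lemma:A-conflations-are-n-exact}) to conclude that the last morphism of the new angle is a split epimorphism, which kills $g_n f_{n+1}$. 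Only then does exactness of the angle of $E$ produce a lift $\phi'_n$ with $\phi'_n f_n = g_n$, after which the radical interior makes $\phi_n \phi'_n$ and $\phi'_n \phi_n$ invertible, so $\phi_n$ is an isomorphism and case (A) reduces to case (B). Your proposal contains no substitute for this step --- indeed it never invokes $n$-extension closedness, (F3), or the completing $(n+2)$-angle in case (A) --- so the outward-propagation argument there would fail.
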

   \begin{proof}
      We show that for any weak isomorphism $\phi = (\phi_0, \dots, \phi_{n+1})$ of $n$-exact sequences
      \includepdf{tikz/weak-equivalence.pdf}
      in $\sA$ the upper $n$-exact sequence $E$ belongs to $\sE_{\sA}$ if and only if the lower sequence $F$ belongs to $\sE_{\sA}$.
      Therefore, we distinguish the following two cases:

      \emph{Suppose $\phi_i$ and $\phi_{i+1}$ are isomorphisms for some $0 \leq i \leq n$.}
      Then we may assume that $E$ belongs to $\sE_{\sA}$ and show that $F$ then belongs to $\sE_{\sA}$ as the proof with reverse roles of $E$ and $F$ is analogous.
      Further, by the replacement \cref{lemma:replacement_lemma}, we may assume $A_i = B_i$ and $A_{i+1} = B_{i+1}$, as well as $\phi_i = \id_{A_i}$ and $\phi_{i+1} = \id_{A_{i+1}}$.
      Now let $\iota \colon E' \to E$ be the inclusion of the minimal $(n+2)$-angle with $f_i$ in position $i$, which is a direct summand of $E$ by \cref{lemma:minimal-n-plus-2-angles-exist}.
      Further let $\pi \colon F \to F'$ be the projection to the direct summand constructed in \cref{lemma:minimal-conflations-1}.
      By construction $\iota \phi \pi$ is still a weak isomorphism with identities in position $i$ and $i+1$.
      Further, since the class $\pentagon$ of $(n+2)$-angles is closed under direct summands, $E'$ is an $\sA$-conflation and by \cref{lemma:minimal-conflations-1} our sequence $F$ is an $\sA$-conflation if $F'$ is so.
      By replacing $(E,F,\phi)$ by $(E',F',\iota \phi \pi)$, without changing any notation, we can assume that $f_0, f_1, \dots, f_{i-2}, f_{i+2}, \dots, f_{n}$ and $g_0, \dots, g_{i-2},g_{i+2}, \dots, g_n$ are in the radical of $\sF$, hence also in the radical of $\sA$.
      If we can now show that $\phi$ is an isomorphism, then $F$ is an $\sA$-conflation by the replacement \cref{lemma:replacement_lemma}.
      Now notice, because $E$ is $n$-exact, $f_i$ is a weak cokernel of $f_{i-1}$ for $i = 1, \dots, n$ and $0 \colon A_{n+1} \to 0$ is a weak cokernel of the epimorphism $f_n$ in $\sA$.
      Similarly $g_i$ is a weak cokernel of $g_{i-1}$ for $i = 1, \dots, n$ and $0 \colon B_{n+1} \to 0$ is a weak cokernel of $g_{n}$ in $\sA$.
      Hence, \cref{lemma:weak-cokernels} shows that $\phi_{i+2}, \dots, \phi_{n+1}$ are isomorphisms.
      By the dual of \cref{lemma:weak-cokernels} also $\phi_0, \dots, \phi_{i-1}$ are isomorphisms.
      Therefore $\phi$ is an isomorphism and the lemma follows if $\phi_i$ and $\phi_{i+1}$ are isomorphisms for some $0 \leq i \leq n$.

      \emph{Suppose $\phi_0$ and $\phi_{n+1}$ are isomorphisms.}
      Without loss of generality, we can again assume that $E$ belongs to $\sE_{\sA}$ and show that $F$ then belongs to $\sE_{\sA}$.
      Therefore, we have a morphism $f_{n+1} \colon A_{n+1} \to \Sigma_n A_0$ completing $E$ to an $(n+2)$-angle $X$.
      Further, by using the replacement \cref{lemma:replacement_lemma} twice, we may assume that $A_0 = B_0$ and $A_{n+1} = B_{n+1}$ as well as $\phi_0 = \id_{A_0}$ and $\phi_{n+1} = \id_{A_{n+1}}$.
      Similarly to the first part of the proof, using \cref{lemma:minimal-n-plus-2-angles-exist} and \cref{lemma:minimal-conflations-2}, we may assume that the morphisms $f_1, \dots, f_{n-1}$ and $g_1, \dots, g_{n-1}$ are in the radical.
      We want to show that $\phi_{n}$ is then already an isomorphism, thus reducing the case of $\phi_0$ and $\phi_{n+1}$ being isomorphisms to the already solved case of $\phi_n$ and $\phi_{n+1}$ being isomorphisms.

      To obtain an inverse of $\phi_n$ we first show $g_n f_{n+1} = 0$.
      Following the idea of the last part of the proof in \cite[proposition 4.8]{jasso_n-abelian_2016} we look at \cref{figure:n+2-angle-of-zero-morphism}. 
      Since $\sA$ is $n$-extension closed, we can find the dashed maps $h_0, \dots, h_n$ of \cref{figure:n+2-angle-of-zero-morphism}
      \begin{figure}[h]
         \includepdf{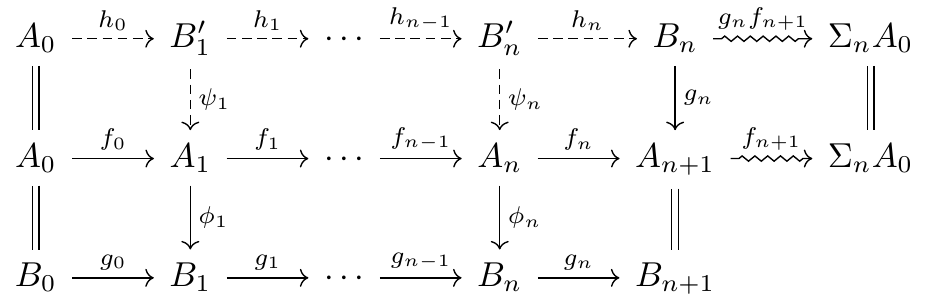}
         \caption{An $(n+2)$-angle arising from $g_n f_{n+1}$.}
         \label{figure:n+2-angle-of-zero-morphism}
      \end{figure}
      such that the upper row is an $(n+2)$-angle and $B'_1,\dots,B'_n \in \sA$.
      By the axiom (F3) from \cite[definition 2.1]{geiss_n-angulated_2013} we find $\psi_1, \dots, \psi_n$ such that \cref{figure:n+2-angle-of-zero-morphism} is a commutative diagram. 
      However, composition of the vertical morphisms in \cref{figure:n+2-angle-of-zero-morphism} yields that the diagram 
      \includepdf{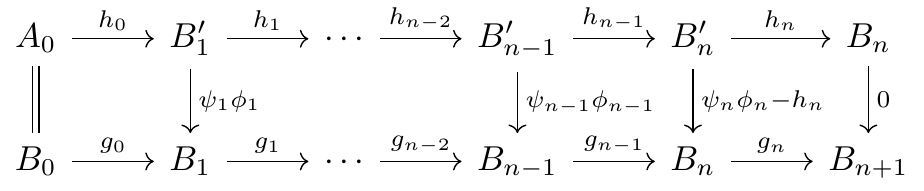}
      is commutative, using $h_{n-1} h_n = 0$ for commutativity of the penultimate square.
      This chain map is homotopic to the zero chain map by the dual of \cite[lemma 2.1]{jasso_n-abelian_2016}. In particular, this shows that $h_0$ is a split-mono and hence $h_n$ is split-epi by \cite[proposition 2.6]{jasso_n-abelian_2016}.
      Now, this yields $g_n f_{n+1} = 0$.

      Since $(n+2)$-angles are exact, $g_n f_{n+1} = 0$ proves existence of a morphism $\phi'_n \colon B_n \to A_n$ as in \cref{figure:weak-equivalence-identity} 
      \begin{figure}[h]
         \includepdf{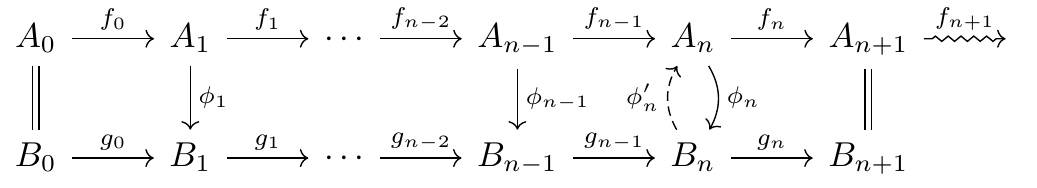}
         \caption{Construction of $\phi'_n$.}
         \label{figure:weak-equivalence-identity}
      \end{figure}
      satisfying $\phi'_n f_n = g_n$.
      From this we obtain $(\phi_n \phi'_n -\id_{A_{n+1}}) f_n = 0$.
      Yet again, by exactness of $(n+2)$-angles, the morphism $\phi_n \phi'_n -\id_{A_{n+1}}$ factors through the radical morphism $f_{n-1}$.
      Therefore, $\phi_n \phi'_n$ is an isomorphism.
      Similarly, $\phi'_n \phi_n - \id_{B_{n+1}}$ factors through $g_{n-1}$ using that $E'$ is an $n$-exact sequence in $\sA$.
      As $g_{n-1}$ is in the radical as well, $\phi'_n \phi_n$ is an isomorphism.
      This shows that $\phi_n$ is an isomorphism.
      Since $\phi_{n+1}$ is an isomorphism as well, $E'$ is an $\sA$-conflation by the first part of the proof.
   \end{proof}

   The axiom (E0) of \cite[definition 4.2]{jasso_n-abelian_2016} is trivially satisfied:

   \begin{lemma} \label{lemma:E0-axiom}
      The $n$-exact sequence $0 \to 0 \to \cdots \to 0$ is an $\sA$-conflation.
   \end{lemma}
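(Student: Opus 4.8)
The plan is to exhibit an explicit completing morphism and to recognize the resulting $n$-$\Sigma_n$-sequence as one of the trivial $(n+2)$-angles already available to us. Concretely, I would show that the zero sequence admits the completion $f_{n+1} = 0 \colon 0 \to \Sigma_n 0 = 0$ and that the resulting sequence lies in $\pentagon$, which is exactly what is required by the definition of an $\sA$-conflation.

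First I would record that $0 \in \sA$. This is immediate from the standing assumption that $\sA$ is additive: being closed under direct sums it contains the empty direct sum, equivalently, being closed under direct summands it contains the zero object as a summand of any of its objects. Hence all terms of the sequence $0 \to 0 \to \cdots \to 0$ lie in $\sA$, and the candidate completing morphism $f_{n+1} = 0$ is a genuine morphism $0 \to \Sigma_n 0$.

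Next I would identify the completed $n$-$\Sigma_n$-sequence. Completing by $f_{n+1} = 0$ produces the sequence $0 \to 0 \to \cdots \to 0 \to \Sigma_n 0$, all of whose objects and morphisms are zero. But this is precisely $\triv_i(0)$ for any (equivalently, every) $i \in \{0,\dots,n+1\}$, since setting $Z = 0$ collapses the two middle copies of $Z$ and the identity $\id_Z$ all to zero. Because the map $\triv \colon \sF \times \{0,\dots,n+1\} \to \pentagon$ takes values in $\pentagon$ by construction, this trivial sequence is an $(n+2)$-angle in $(\sF,\Sigma_n,\pentagon)$. Therefore the zero sequence, together with $f_{n+1} = 0$, witnesses that it is an $\sA$-conflation, and it is automatically $n$-exact by \cref{lemma:A-conflations-are-n-exact}, consistent with the way the statement is phrased.

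I do not expect any genuine obstacle here: the only two things to verify are that $0 \in \sA$ and that the all-zero $(n+2)$-angle is available, both of which follow directly from the additivity of $\sA$ and from the fact that the trivial $(n+2)$-angles $\triv_i(Z)$ (in particular $\triv_i(0)$) are, by definition, members of $\pentagon$. The proof is thus essentially a matter of matching the zero sequence against the existing trivial-angle construction.
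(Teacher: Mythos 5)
Your proof is correct and takes essentially the same route as the paper, whose entire proof is the one-liner that $0 \in \sA$ because $\sA$ is additive; your additional identification of the completed zero sequence with the trivial $(n+2)$-angle $\triv_i(0) \in \pentagon$ is exactly the step the paper leaves implicit.
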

   \begin{proof}
      This follows from $0 \in \sA$ as $\sA \subset \sF$ is additive.
   \end{proof}
   
   That the axiom (E1) and (E1${}^{\op}$) of \cite[defintion 4.2]{jasso_n-abelian_2016} are satisfied for $(\sA, \sE_{\sA})$ follows almost immediately from our work in \cref{subsection:about_N4}.
   We only present a proof of (E1) as the proof of (E1${}^{\op}$) is completely analogous.
   
   \begin{lemma} \label{lemma:E1-axiom}
      The composite $h_0 = g_0 f_0$ of two $\sA$-inflations $f_0$ and $g_0$ is an $\sA$-inflation.
   \end{lemma}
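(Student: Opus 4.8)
The plan is to exhibit a minimal $(n+2)$-angle with $h_0$ in position $0$ all of whose objects lie in $\sA$; such a minimal $(n+2)$-angle then provides an $\sA$-conflation realising $h_0$ as an $\sA$-inflation. For $n = 1$ the statement is part of the classical result of \cite{dyer_exact_2005}, and for $n = 2$ one argues exactly as below, using the variant of the comparison lemma noted after its statement with $Z_1 \in \add(X_3 \oplus X'_2)$. So I would concentrate on the case $n \geq 3$, where the comparison \cref{lemma:comparison-lemma} applies.

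First I would fix $(n+2)$-angles realising the two inflations. Since $f_0$ and $g_0$ are $\sA$-inflations, each occurs in position $0$ of an $\sA$-conflation, which completes to an $(n+2)$-angle with all terms in $\sA$; arranging these as the first and third rows of \cref{figure:N4-asterik-axiom-1} (so that $X'_0 = X_1$ and $h_0 = f_0 g_0 \colon X_0 \to X'_1$) and taking the middle row to be the minimal $(n+2)$-angle of $h_0$ (which exists by \cref{lemma:minimal-n-plus-2-angles-exist}), we land precisely in the hypotheses of \cref{lemma:comparison-lemma}. The objects of this middle row are $X_0, X'_1, X''_2, \dots, X''_{n+1}$, and $X_0, X'_1 \in \sA$ are the source and target of $h_0$; so it remains only to prove $X''_2, \dots, X''_{n+1} \in \sA$.

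The key preliminary input is that the minimal $(n+2)$-angle of $k_{n+1} = g_{n+1}\Sigma_n f_1$ already has all its objects in $\sA$. Indeed $g_{n+1}\Sigma_n f_1 \colon X'_{n+1} \to \Sigma_n X_2$ has source $X'_{n+1} \in \sA$ and target $\Sigma_n X_2$ with $X_2 \in \sA$, so by $n$-extension closedness (\cref{definition:n-extension-closed}) it completes to an $(n+2)$-angle with this morphism in position $n+1$ and all terms in $\sA$. By \cref{lemma:minimal-n-plus-2-angles-exist} together with \cref{lemma:uniqueness-of-minimal-n+2-angles}, the minimal $(n+2)$-angle of $g_{n+1}\Sigma_n f_1$ is a direct summand of this completion, so its objects $Y_1, \dots, Y_n$ lie in $\sA$ because $\sA$ is closed under direct summands.

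Finally I would feed this into \cref{lemma:comparison-lemma}. Its add-properties place each $Z_k$ in the additive closure of objects drawn from the first and third rows (the various $X_j$ and $X'_j$), all of which lie in $\sA$; hence $Z_1, \dots, Z_{n-1} \in \sA$. This step is essential: one cannot read off $X''_k \in \sA$ straight from the totalisation, since its objects already involve the unknown $X''_k$, and it is exactly the independently computed $Y_k$ and $Z_k$ that break this apparent circularity. With $Y_k, Z_k \in \sA$ secured, the sum-properties $X_3 \oplus X''_2 = Y_1 \oplus Z_1$, $X_{k+2} \oplus X''_{k+1} \oplus X'_k = Z_{k-1} \oplus Y_k \oplus Z_k$ and $X''_{n+1} \oplus X'_n = Z_{n-1} \oplus Y_n$ exhibit each of $X''_2, \dots, X''_{n+1}$ as a direct summand of an object of $\sA$, so all of them lie in $\sA$. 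Thus the minimal $(n+2)$-angle of $h_0$ has every object in $\sA$, yielding the desired $\sA$-conflation and proving $h_0$ to be an $\sA$-inflation. The only genuine obstacle at this stage is the non-circular bookkeeping just described; the substantial work is carried by the comparison lemma, which is already established.
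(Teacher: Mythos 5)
Your proposal is correct and follows essentially the same route as the paper's own proof: arrange the two given $\sA$-conflations and the minimal $(n+2)$-angle of $h_0$ as the three rows of the (N4*) setup, deduce $Y_1,\dots,Y_n \in \sA$ from $n$-extension closedness of $\sA$ plus the fact that the minimal $(n+2)$-angle of $g_{n+1}\Sigma_n f_1$ is a direct summand of any completion, and then read off $X''_2,\dots,X''_{n+1} \in \sA$ from the add- and sum-properties of \cref{lemma:comparison-lemma}. Your explicit treatment of the cases $n=1$ and $n=2$ is a small point of extra care that the paper's proof leaves implicit.
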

   \begin{proof}
      Let $f_0 \colon X_0 \to X_0'$ and $g_0 \colon X_0' \to X_1'$ be $\sA$-inflations and $h_0 = f_0 g_0 \colon X_0 \to X_1'$ their composite.
      Because $f_0$ and $g_0$ are $\sA$-inflations we can choose $X_2, \dots, X_{n+1} \in \sA$ and $X_2', \dots, X'_{n+1} \in \sA$, as well as $X''_2, \dots, X''_{n+1} \in \sF$ so that all three rows in \cref{figure:N4-asterik-axiom-1} are $(n+2)$-angles and the middle row in \cref{figure:N4-asterik-axiom-1} is a minimal $(n+2)$-angle.
      In the notation of \cref{figure:N4-asterik-axiom-1} and \cref{lemma:comparison-lemma}, using that $\sA$ is $n$-extension closed and that $g_{n+1} \Sigma_n f_1$ is a morphism from an object in $\sA$ to an object in $\Sigma_n \sA$, we see that $Y_1, \dots, Y_n$ belong to $\sA$ as the minimal $(n+2)$-angle of $g_{n+1} \Sigma_n f_1$ depicted in \cref{lemma:comparison-lemma} is a direct summand of any $(n+2)$-angle with $g_{n+1} \Sigma_n f_1$ in position $n+2$.
      However, this means that the objects $X''_2, \dots, X''_{n+1}$ of the minimal $(n+2)$-angle of $h_0$ belong to $\sA$ using (\ref{enum:begin-Z-add-property})-(\ref{enum:end-sum-property}) of \cref{lemma:comparison-lemma}.
      Hence $h_0$ is an $\sA$-inflation.
   \end{proof}

   Finally, that (E2) and (E2${}^{\op}$) of \cite[definition 4.2]{jasso_n-abelian_2016} are satisfied for $(\sA, \sE_{\sA})$ follows easily from the definitions.
   Again, we present a proof for (E2) only, as the proof of (E2${}^{\op}$) is completely analogous.

   \begin{lemma} \label{lemma:E2-axiom}
      Suppose we are given a diagram
      \includepdf{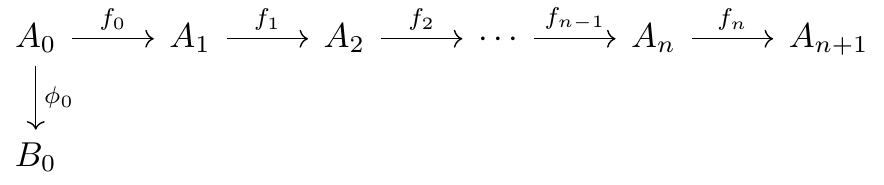}
      where the upper row is a $\sA$-conflation $A$. 
      Then there is an $n$-pushout diagram \cref{figure:n-pushout}
      \begin{figure}[h]
      \includepdf{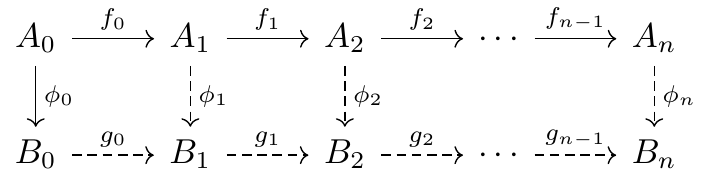}
      \caption{$n$-Pushout diagram of $A$ along $\phi_0$.}
      \label{figure:n-pushout}
      \end{figure}
      in the sense of \cite[definition 2.11]{jasso_n-abelian_2016} such that $g_0$ is an $\sA$-inflation.
   \end{lemma}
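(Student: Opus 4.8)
The plan is to realise the bottom row as the conflation part of a genuine $(n+2)$-angle and then to recognise the $n$-pushout complex as an $\sA$-conflation. First I would complete $A$ to an $(n+2)$-angle
\[ X\colon A_0 \xrightarrow{f_0} A_1 \to \cdots \to A_{n+1} \xrightarrow{f_{n+1}} \Sigma_n A_0 \]
and form the morphism $f_{n+1}\Sigma_n\phi_0 \colon A_{n+1} \to \Sigma_n B_0$. Since $A_{n+1},B_0 \in \sA$ and $\sA$ is $n$-extension closed, this morphism is the connecting morphism of an $(n+2)$-angle
\[ Y\colon B_0 \xrightarrow{g_0} B_1 \to \cdots \to B_n \xrightarrow{g_n} A_{n+1} \xrightarrow{f_{n+1}\Sigma_n\phi_0} \Sigma_n B_0 \]
with $B_1,\dots,B_n \in \sA$. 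Its conflation part is the candidate bottom row of \cref{figure:n-pushout}, and by construction $g_0$ is an $\sA$-inflation, which already settles the final clause of the lemma.

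Next I would supply the vertical maps as a morphism $\phi\colon X \to Y$ of $(n+2)$-angles with $\phi_0$ in position $0$ and $\phi_{n+1} = \id_{A_{n+1}}$. To force the identity in the last column I would rotate both angles so that $A_{n+1}$ occupies position $0$; then $\id_{A_{n+1}}$ and $\Sigma_n\phi_0$ form a commutative first square — this is exactly where the choice of connecting morphism for $Y$ is used — and \cref{lemma:completion-lemma} in the case $i = 1$ (that is, the axiom (F3)) completes them to a morphism of the rotated angles. Rotating back produces $\phi$ with the prescribed $\phi_0$ and $\phi_{n+1} = \id_{A_{n+1}}$, and discarding the $\Sigma_n$-terms leaves precisely the commutative diagram of \cref{figure:n-pushout}.

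It then remains to verify that this diagram is an $n$-pushout in the sense of \cite[definition 2.11]{jasso_n-abelian_2016}, i.e.\ that the mapping-cone complex
\[ A_0 \xrightarrow{c_0} A_1 \oplus B_0 \xrightarrow{c_1} \cdots \xrightarrow{c_{n-1}} A_n \oplus B_{n-1} \xrightarrow{c_n} B_n \]
whose differentials are the usual matrices built from $f_\bullet$, $g_\bullet$ and $\phi_\bullet$, is an $n$-exact sequence in $\sA$. The weak-kernel and weak-cokernel conditions at the interior positions I would obtain by applying $\Hom_\sF(W,-)$ and $\Hom_\sF(-,W)$ for $W \in \sA$ and chasing the resulting long exact sequences, which are exact because $X$ and $Y$ are $(n+2)$-angles, together with the commutativity of $\phi$. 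For the end conditions, $c_0 = \left[\begin{smallmatrix} -f_0 \\ \phi_0 \end{smallmatrix}\right]$ is a monomorphism because $f_0$ already is (the sequence $A$ being $n$-exact by \cref{lemma:A-conflations-are-n-exact}); and for $c_n = \left[ \phi_n \ \ g_{n-1} \right]$ one checks that any $b \colon B_n \to W$ with $c_n b = 0$ factors as $b = g_n f_{n+1} c$ for some $c \colon \Sigma_n A_0 \to W$, whence $c = 0$ and $b = 0$ since $\Hom_\sF(\Sigma_n\sA,\sA) = 0$; thus $c_n$ is an epimorphism.

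The main obstacle is this last verification. The interior exactness of the cone complex is essentially a bookkeeping exercise with the two long exact sequences and the commutativity of $\phi$; the genuinely essential input, beyond formal exactness, is the hypothesis $\Hom_\sF(\Sigma_n\sA,\sA) = 0$, which is exactly what upgrades the weak kernel and weak cokernel at the two ends into an honest monomorphism and epimorphism. The statement (E2${}^{\op}$) then follows dually, building the top row from $\Sigma_n^{-1}$ of the connecting morphism composed with the given deflation.
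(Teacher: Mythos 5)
Your proposal is correct, but it takes a genuinely different route from the paper's proof at the decisive step. The construction of the bottom row is the same in both arguments (the paper writes the completed angle starting from $\Sigma_n^{-1}A_{n+1}$ and completes $(\Sigma_n^{-1}f_{n+1})\phi_0$ to an $(n+2)$-angle, which is your $Y$ up to rotation), and so is the normalisation $\phi_{n+1} = \id_{A_{n+1}}$. The divergence is in how the pushout property is obtained: the paper invokes the higher octahedral axiom (F4) of \cite[definition 2.1]{geiss_n-angulated_2013} to choose a \emph{good} morphism $\phi = (\phi_0,\dots,\phi_n,\id_{A_{n+1}})$, i.e.\ one whose mapping cone is an $(n+2)$-angle, splits off the contractible summand coming from $\id_{A_{n+1}}$ as in \cite[lemma 4.1]{bergh_axioms_2013}, and concludes that the cone complex $A_0 \to A_1\oplus B_0 \to \cdots \to A_n \oplus B_{n-1} \to B_n \rightsquigarrow \Sigma_n A_0$ is itself an $\sA$-conflation, whence \cref{lemma:A-conflations-are-n-exact} gives the $n$-pushout property. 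You instead take an \emph{arbitrary} (F3)-completion (your rotation trick is sound; the signs produced by rotating both angles the same number of times cancel) and verify the required exactness of the cone by hand, and this does work: the interior chases use only exactness of the two angles under $\Hom_{\sF}(-,W)$ and the commuting squares, the chase at the last position uses $\phi_n g_n = f_n\phi_{n+1} = f_n$, and the chase at position $0$ uses $g_{n+1} = f_{n+1}\Sigma_n\phi_0$. Two remarks. First, an $n$-pushout diagram in the sense of \cite[definition 2.11]{jasso_n-abelian_2016} only requires the weak-cokernel conditions on the cone, so your end verifications (monomorphism and epimorphism) are not needed for the lemma; they are correct, and they prove the stronger statement that the cone is an $n$-exact sequence, but consequently the hypothesis $\Hom_{\sF}(\Sigma_n\sA,\sA)=0$, which enters only there, is not actually needed for the $n$-pushout property itself. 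Second, the trade-off: your argument is more elementary and shows that, in contrast to (E1) where the paper emphasises that (F4) is essential, axiom (E2) can be established from (F3) and exactness alone; the paper's argument is shorter given \cite[lemma 4.1]{bergh_axioms_2013} and yields strictly more, namely that the cone complex is realised by an $(n+2)$-angle with terms in $\sA$ rather than merely being $n$-exact.
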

   \begin{proof}
      Let $A$ be a $\sA$-conflation as in the lemma.
      This means that we find a morphism $f_{-1} \colon \Sigma_n^{-1} A_{n+1} \to A_0$ completing $A$ to an $(n+2)$-angle.
      This is to say, we are given the undashed morphisms of \cref{figure:E2-axiom-2}
      \begin{figure}[h]
         \includepdf{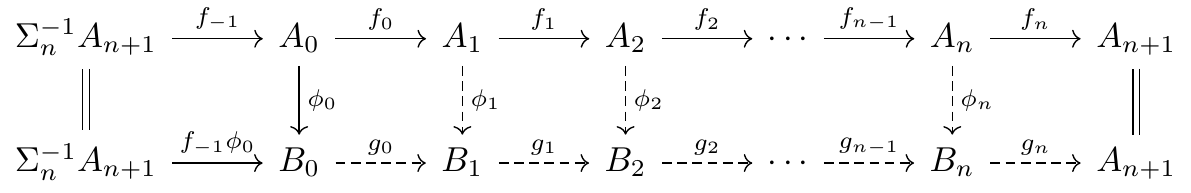}
         \caption{Construction of an $n$-pushout of $A$ along $\phi_0$.}
         \label{figure:E2-axiom-2}
      \end{figure}
      so that the upper row is an $(n+2)$-angle.
      Since $\sA \subset \sF$ is an $n$-extension closed subcategory, we find objects $B_1, \dots, B_n \in \sA$ and the dashed morphisms $g_0, \dots, g_n$ as in \cref{figure:E2-axiom-2} such that the lower row is an $(n+2)$-angle.
      Notice, $g_0$ is an $\sA$-inflation, as the second row of \cref{figure:E2-axiom-2} is an $(n+2)$-angle.
      By the axiom (F4) of \cite[defintion 2.1]{geiss_n-angulated_2013} there are dashed morphisms $\phi_1, \dots, \phi_n$ in \cref{figure:E2-axiom-2} so that the depicted morphism $\phi = (\phi_0, \dots, \phi_n, \id_{A_{n+1}})$ of $(n+2)$-angles is a good morphism of $(n+2)$-angles, i.e.\ so that the mapping cone of $\phi$ is an $(n+2)$-angle as well.
      Then similarly to \cite[lemma 4.1]{bergh_axioms_2013} we conclude that the $n$-$\Sigma_n$-sequence $P$ shown in \cref{figure:E2-axiom-direct-summand}
      \begin{figure}[h]
         \includepdf{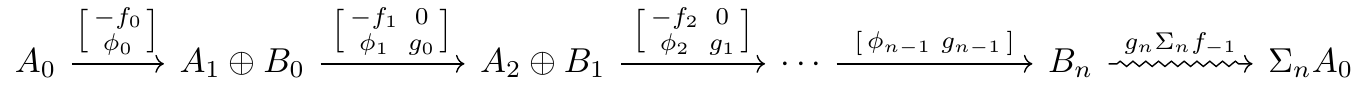}
         \caption{Direct summand $P$ of the mapping cone of $\phi$.}
         \label{figure:E2-axiom-direct-summand}
      \end{figure}
      is an $(n+2)$-angle, as $\pentagon$ is closed under direct summands and because $P$ is a direct summand of the cone of $\phi$, as shown in
      \begin{figure}[h]
         \centering \includegraphics{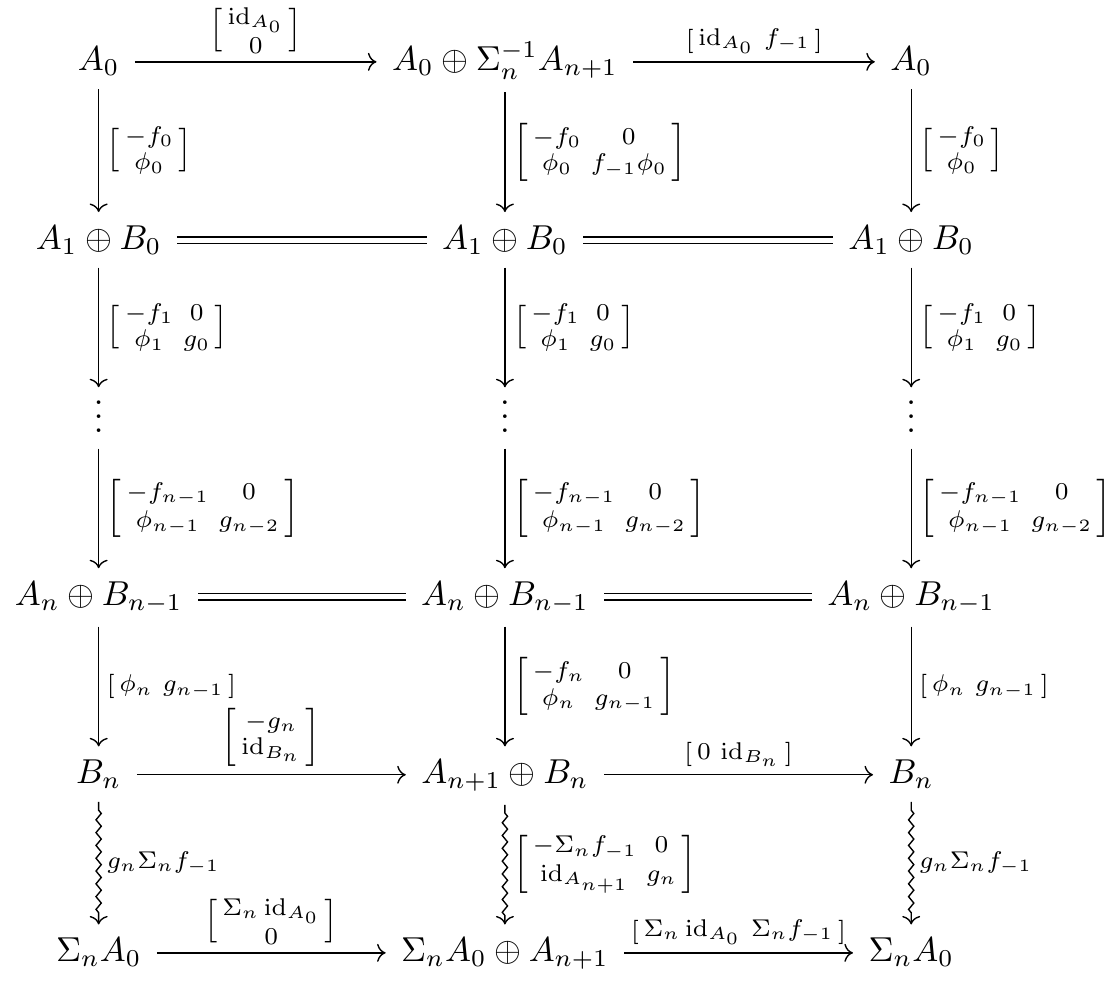}
         \caption{Diagram showing that the $n$-$\Sigma_n$-sequence $P$ given in \cref{figure:E2-axiom-direct-summand} is a direct summand of the mapping cone of $\phi = (\phi_0, \dots, \phi_n, \id_{A_{n+1}})$ as shown in \cref{figure:E2-axiom-2}.}
         \label{figure:E2-axiom-direct-summand-2}
      \end{figure}
      \cref{figure:E2-axiom-direct-summand-2}.
      Moreover, since all $\sA$-conflations are $n$-exact by \cref{lemma:A-conflations-are-n-exact}, we conclude from the $(n+2)$-angle $P$ shown in \cref{figure:E2-axiom-direct-summand} that the constructed objects $B_1, \dots, B_n$ and morphisms $g_0, \dots, g_{n-1}$ and $\phi_1, \dots, \phi_{n}$ make \cref{figure:n-pushout} an $n$-pushout diagram.
   \end{proof}

   Notice, \cref{lemma:A-conflations-are-n-exact}, \cref{lemma:weak-isomorphism-closure}, \cref{lemma:E0-axiom}, \cref{lemma:E1-axiom} and \cref{lemma:E2-axiom} show that part (\ref{enum:maintheorem-1}) of \cref{theorem:maintheorem} holds.
   It remains to show part (\ref{enum:maintheorem-2}) of \cref{theorem:maintheorem}.
   In the following we will write $[E]$ for the equivalence class of $E$ in $\YExt_{(\sA, \sE_{\sA})}^n(A_{n+1}, A_0)$, where a conflation $E \in \sE_{\sA}$ is viewed as an $n$-extension of an object $A_{n+1} \in \sA$ by an object $A_{0} \in \sA$.

   \begin{lemma} \label{lemma:Yext-isomorphism}
      There is a natural bilinear isomorphism 
      \[\Hom_{\sF}(-, \Sigma_n(-)) \to \YExt^n_{(\sA, \sE_{\sA})}(-, -)\] 
      of functors $\sA \times \sA^{\op} \to \Ab$.
   \end{lemma}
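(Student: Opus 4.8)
The plan is to construct the natural transformation explicitly from $\Hom$ to $\YExt$ and to produce an inverse using the connecting morphism. For $A_0, A_{n+1} \in \sA$ and $f_{n+1} \colon A_{n+1} \to \Sigma_n A_0$, the hypothesis that $\sA$ is $n$-extension closed (\cref{definition:n-extension-closed}) yields an $(n+2)$-angle $A_0 \to A_1 \to \cdots \to A_n \to A_{n+1} \xrightarrow{f_{n+1}}$ with $A_1, \dots, A_n \in \sA$; its underlying complex $E$ is an $\sA$-conflation, hence an $n$-exact sequence by \cref{lemma:A-conflations-are-n-exact}, and I set $\Theta_{A_0,A_{n+1}}(f_{n+1}) := [E] \in \YExt^n_{(\sA,\sE_{\sA})}(A_{n+1},A_0)$. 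To see this is well defined I would take two completions $X, X'$ of $f_{n+1}$; since consecutive morphisms in $X'$ compose to zero we have $(\Sigma_n^{-1} f_{n+1}) g'_0 = 0$, so the completion \cref{lemma:completion-lemma} applied with $\phi_0 = \id_{A_0}$ produces a morphism $X \to X'$ of $(n+2)$-angles restricting to a morphism of conflations that is the identity at $A_0$. Because $\Hom_\sF(\Sigma_n A_0, A_{n+1}) = 0$, the argument of \cref{lemma:connecting-morphism-is-unique} forces its component at $A_{n+1}$ to be $\id_{A_{n+1}}$ as well, so $X \to X'$ is an equivalence of $n$-exact sequences and $[E] = [E']$.

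Conversely, every $\sA$-conflation $E$ admits, by definition, a connecting morphism $f_{n+1}$ completing it to an $(n+2)$-angle, and \cref{lemma:connecting-morphism-is-unique} (using $\Hom_\sF(\Sigma_n A_0, A_{n+1}) = 0$) shows this $f_{n+1}$ is unique; I define $\Psi(E) := f_{n+1}$. If $\phi \colon E \to F$ is a morphism of $n$-exact sequences that is the identity on the end terms $A_0$ and $A_{n+1}$, then extending $\phi_0, \dots, \phi_n$ by the completion \cref{lemma:completion-lemma} and invoking \cref{lemma:connecting-morphism-is-unique} gives $\Psi(E) = \Psi(F)$; as such morphisms generate the equivalence relation defining $\YExt^n$, the assignment $\Psi$ descends to a map $\YExt^n_{(\sA,\sE_{\sA})}(A_{n+1},A_0) \to \Hom_\sF(A_{n+1},\Sigma_n A_0)$. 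By construction $\Psi \circ \Theta = \id$, and using well-definedness of $\Theta$ one gets $\Theta \circ \Psi = \id$, so $\Theta$ is bijective.

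Next I would establish naturality in both arguments, which is where the real work lies. For $b \colon A_0 \to A_0'$ in $\sA$, the induced map on $\YExt^n$ is computed by the $n$-pushout of $E$ along $b$, which exists and is again an $\sA$-conflation by \cref{lemma:E2-axiom}; the morphism of $(n+2)$-angles constructed there has component $\id_{A_{n+1}}$ in the last position, so its final square reads $g_{n+1} = f_{n+1}\,\Sigma_n b$, exhibiting the pushout of $[E]$ as the $\Theta$-image of $f_{n+1}\Sigma_n b$. Dually, for $a \colon A_{n+1}' \to A_{n+1}$ the $n$-pullback (the dual of \cref{lemma:E2-axiom}) represents the pullback of $[E]$ and has connecting morphism $a\,f_{n+1}$. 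Thus $\Theta$ intertwines pre- and post-composition on the $\Hom$ side with the pullback/pushout functoriality of $\YExt^n$, i.e.\ $\Theta$ is a natural transformation of bifunctors $\sA \times \sA^{\op} \to \Ab$. I expect this to be the main obstacle: one must match the abstract functoriality and group law that $\YExt^n$ carries (from \cite{jasso_n-abelian_2016}) with the concrete $(n+2)$-angle constructions, and verify that the completion-lemma morphisms have exactly the predicted end components.

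Finally, bilinearity follows formally. The direct sum $E \oplus E'$ is an $\sA$-conflation completing $f_{n+1} \oplus f'_{n+1}$, so $\Theta(f_{n+1}) + \Theta(f'_{n+1})$ is represented, via the Baer sum, by the pullback of $E \oplus E'$ along the diagonal $\Delta_{A_{n+1}}$ followed by the pushout along the codiagonal $\nabla_{A_0}$. By the naturality just established, this extension is the $\Theta$-image of $\Delta_{A_{n+1}}(f_{n+1}\oplus f'_{n+1})\Sigma_n\nabla_{A_0} = f_{n+1} + f'_{n+1}$, whence $\Theta$ is additive; together with naturality this makes $\Theta$ the desired natural bilinear isomorphism, completing the proof of \cref{theorem:maintheorem}(\ref{enum:maintheorem-2}).
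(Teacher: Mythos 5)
Your overall architecture coincides with the paper's: two mutually inverse maps between $\Hom_{\sF}(A_{n+1},\Sigma_n A_0)$ and $\YExt^n_{(\sA,\sE_{\sA})}(A_{n+1},A_0)$, with \cref{lemma:connecting-morphism-is-unique} governing the direction $\YExt\to\Hom$, and your naturality/bilinearity discussion fills in what the paper declares straightforward. However, there is a genuine gap in your well-definedness argument for $\Theta$. The completion \cref{lemma:completion-lemma}, applied with only $\phi_0 = \id_{A_0}$ prescribed, produces \emph{some} morphism $\phi\colon X \to X'$ of $(n+2)$-angles, and nothing forces $\phi_{n+1} = \id_{A_{n+1}}$. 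The uniqueness argument in \cref{lemma:connecting-morphism-is-unique} only says: for a \emph{fixed} $\phi_n$, there is at most one $\phi_{n+1}$ satisfying $f_n\phi_{n+1} = \phi_n g'_n$. To conclude that this unique morphism is $\id_{A_{n+1}}$ you would need to know beforehand that $\id_{A_{n+1}}$ makes the penultimate square commute with that same $\phi_n$, i.e.\ $f_n = \phi_n g'_n$. In \cref{lemma:connecting-morphism-is-unique} this is part of the hypothesis (the identity at position $n+1$ belongs to the given commutative diagram); in your situation it is exactly what you are trying to prove, so the argument is circular. It also genuinely fails: take $f_{n+1} = 0$ and $X = X' = \triv_0(A_0)\oplus\triv_n(A_{n+1})$, a legitimate completion with all terms in $\sA$. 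The endomorphism of $X$ which is the identity on the summand $\triv_0(A_0)$ and zero on the summand $\triv_n(A_{n+1})$ is a morphism of $(n+2)$-angles with $\phi_0 = \id_{A_0}$ but $\phi_{n+1} = 0$, no matter that $\Hom_{\sF}(\Sigma_n A_0, A_{n+1}) = 0$. Since the equivalence relation defining $\YExt^n$ is generated by morphisms that are the identity on \emph{both} end terms, a morphism with uncontrolled last component does not yield $[E] = [E']$.

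The repair is what the paper does: use the fact that the two completions share the morphism $f_{n+1}$, and prescribe \emph{both} end identities from the start. Rotate $X$ and $X'$ so that each begins with the common morphism $A_{n+1}\xrightarrow{f_{n+1}}\Sigma_n A_0$ (the signs introduced by rotation agree for the two angles); the square formed by $\id_{A_{n+1}}$ and $\id_{\Sigma_n A_0}$ then commutes trivially, axiom (F3) of \cite[definition 2.1]{geiss_n-angulated_2013} (equivalently, the case $i=1$ of your completion lemma) extends it to a morphism of the rotated angles, and rotating back gives a morphism $(\id_{A_0},\phi_1,\dots,\phi_n,\id_{A_{n+1}})\colon X \to X'$, i.e.\ an honest equivalence of the underlying conflations, whence $[E_X] = [E_{X'}]$. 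Note that your $\Psi$-direction is unaffected by this issue: there the identity in position $n+1$ is part of the given equivalence data, so $f_n = \phi_n g_n$ is a hypothesis and \cref{lemma:connecting-morphism-is-unique} applies exactly as stated. With this single correction your proof matches the paper's, and your pushout/pullback verification of naturality and the Baer-sum argument for bilinearity are a reasonable elaboration of the final check the paper leaves to the reader.
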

   \begin{proof}
      We construct $\Psi_{A_0,A_{n+1}} \colon \YExt_{(\sA, \sE_{\sA})}^n(A_{n+1}, A_0) \to \Hom_{\sF}(A_{n+1}, \Sigma_n A_0)$ for any fixed pair $A_0, A_{n+1} \in \sA$:
      Suppose $E \in \sE_{\sA}$ is an $\sA$-conflation with end terms $A_0$ and $A_{n+1}$.
      Then there is a $(n+2)$-angle $X_E$ of the shape
      \includepdf{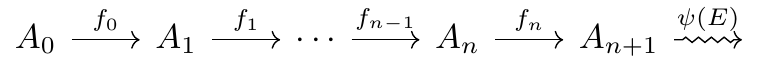}
      completing $E$ to an $(n+2)$-angle.
      Notice that $\psi(E)$ is uniquely determined by $E$, using \cref{lemma:connecting-morphism-is-unique}.
      Hence, this way a map $\psi \colon \sE_{\sA} \to \Hom_{\sF}(\sA, \Sigma_n \sA)$ is defined.
      Moreover, \cref{lemma:connecting-morphism-is-unique} shows that we have $\psi(E) = \psi(E')$ if $E$ and $E'$ are linked by a sequence of equivalences in the sense of \cite[defintion 2.9]{jasso_n-abelian_2016}, i.e.\ if $[E] = [E']$.
      Hence, $\psi$ induces a map $\Psi_{A_0, A_{n+1}} \colon \YExt_{(\sA, \sE_{\sA})}^n(A_{n+1}, A_0) \to \Hom_{\sF}(A_{n+1}, \Sigma_n A_0), \, [E] \mapsto \psi(E)$ for each pair $A_0,A_{n+1} \in \sA$.

      Conversely we construct $\Phi_{A_0,A_{n+1}} \colon \Hom_\sF(A_{n+1}, \Sigma_n A_0) \to \YExt^n_{(\sA, \sE_{\sA})}(A_{n+1}, A_0)$ for each fixed pair $A_0, A_{n+1} \in \sA$: 
      Assume we are given an $f \in \Hom_{\sF}(A_{n+1}, \Sigma_n A_0)$.
      Since $\sA$ is $n$-extension closed, we can construct an $(n+2)$-angle $X_f$ of the shape in \cref{figure:completion-of-f}
      \begin{figure}[h]
         \includepdf{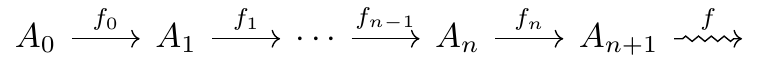}
         \caption{Completion of $f$ to an $(n+2)$-angle.}
         \label{figure:completion-of-f}
      \end{figure}
      with $A_1, \dots, A_{n} \in \sA$. 
      Denote the $\sA$-conflation arising from such an $(n+2)$-angle $X_f$ by $E_{X_f}$.
      We claim, the equivalence class $[E_{X_f}]$ does not depend on the choice of $X_f$:
      Suppose we are given another completion of $f$ to an $(n+2)$-angle $X'_f$ of the shape
      \includepdf{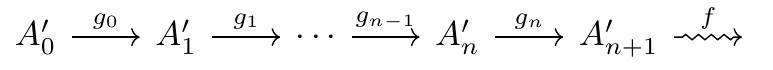}
      with $A'_1, \dots, A'_{n} \in \sA$.
      Using the axiom (F3) from \cite[definition 2.1]{geiss_n-angulated_2013} we find the dashed morphisms of a commutative diagram
      \includepdf{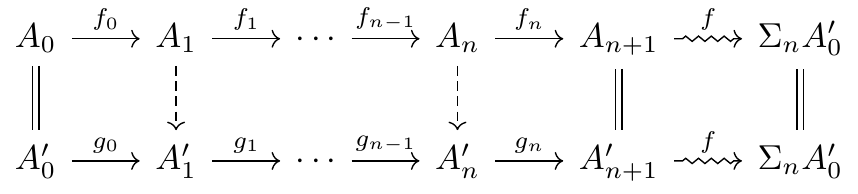}
      and hence obtain $[E_{X_f}] = [E_{X'_f}]$.
      Therefore, for $A_0,A_{n+1} \in \sA$ the assignment 
      \[\Phi_{A_0,A_{n+1}} \colon \Hom_\sF(A_{n+1}, \Sigma_n A_0) \to \YExt^n_{(\sA, \sE_{\sA})}(A_{n+1}, A_0), \, f \mapsto [E_{X_f}]\] 
      where $X_f$ is an arbitrary $(n+2)$-angle as in \cref{figure:completion-of-f} is well-defined.
      By construction it is clear that $\Phi_{A_0,A_{n+1}}$ and $\Psi_{A_0, A_{n+1}}$ for any fixed pair $A_0,A_{n+1} \in \sA$ are mutually inverse to each other.
      It is straight forward to check, that this way a natural and bilinear isomorphism $\Phi$ is defined.
   \end{proof}

   \begin{acknowledgment}
      Thanks to Peter Jørgensen for suggesting this project to me and for his mathematical and linguistic advice and corrections.

      This work was supported by Aarhus University Research Foundation, grant no. AUFF-F-2020-7-16.
   \end{acknowledgment}

\bibliographystyle{alpha}

\end{document}